\documentclass[a4paper,10pt]{article}

\usepackage[english]{babel}

\usepackage[]{amsmath}
\usepackage{amssymb}
\usepackage{amsthm}  
\theoremstyle{plain}
\newtheorem{theorem}{Theorem}
\newtheorem{proposition}[theorem]{Proposition}
\newtheorem{lemma}[theorem]{Lemma}
\newtheorem{corollary}[theorem]{Corollary}
\theoremstyle{definition}

\newtheorem{remark}[theorem]{Remark}
\newtheorem{example}[theorem]{Example}

\usepackage{subcaption}

\usepackage{algorithm,algpseudocode}
\usepackage{siunitx}
\usepackage{placeins}
\usepackage{multicol}
\usepackage{hyperref}
\usepackage{graphicx}
\usepackage{tikz}
\usetikzlibrary{arrows,decorations.pathmorphing,decorations.markings,math,arrows.meta,patterns,calc}
\usepackage{cleveref}
\usepackage{makeidx}  
\allowdisplaybreaks

\DeclareMathOperator{\argmax}{argmax}
\DeclareMathOperator{\blkdiag}{blkdiag}

\DeclareMathOperator{\rank}{rank}
\DeclareMathOperator{\diag}{diag}

\newcommand{\minus}{\scalebox{0.75}[1.0]{\( \,-\, \)}}
\newcommand{\plus}{\scalebox{0.75}[1.0]{\( \,+\, \)}}
\newcommand{\negative}{\scalebox{0.75}[1.0]{\( - \)}}
\newcommand{\hinf}{\mathcal{H}_{\infty}}
\newcommand{\hinfnorm}{$\mathcal{H}_{\infty}$-norm}
\def\ie{i.e.}
\def\Ltwo{L^{2}}

\makeatletter
\pgfdeclarepatternformonly[\LineSpace]{my north west lines}{\pgfqpoint{-1pt}{-1pt}}{\pgfqpoint{\LineSpace}{\LineSpace}}{\pgfqpoint{\LineSpace}{\LineSpace}}%
{
	\pgfsetcolor{\tikz@pattern@color}
	\pgfsetlinewidth{0.4pt}
	\pgfpathmoveto{\pgfqpoint{0pt}{\LineSpace}}
	\pgfpathlineto{\pgfqpoint{\LineSpace + 0.1pt}{-0.1pt}}
	\pgfusepath{stroke}
}
\makeatother

\newdimen\LineSpace
\tikzset{
	LineSpace/.code={\LineSpace=#1},
	LineSpace=3pt
}

\newtheorem{assumption}{Assumption}
\theoremstyle{definition}

\crefname{property}{Property}{Properties}
\crefname{corollary}{Corollary}{Corollaries}

	\title{A scalable controller synthesis method for the robust control of networked systems}

\date{}
\author{Pieter Appeltans and Wim Michiels \\ Department of Computer Science, KU Leuven\\ {\tt \{pieter,wim\}.\{appeltans,michiels\}@cs.kuleuven.be}}
\begin{document}

\maketitle
\begin{abstract}
	This manuscript discusses a scalable controller synthesis\linebreak method for networked systems with a large number of identical subsystems based on the H-infinity control framework. The dynamics of the individual subsystems are described by identical linear time-invariant delay differential equations and the effect of transport and communication delay is explicitly taken into account. The presented method is based on the result that, under a particular assumption on the graph describing the interconnections between the subsystems, the H-infinity norm of the overall system is upper bounded by the robust H-infinity norm of a single  subsystem with an additional uncertainty. This work will therefore briefly discuss a recently developed method to compute this last quantity. The resulting controller is then obtained by directly minimizing this upper bound in the controller parameters.
\end{abstract}
{
\small
{\it Keywords} --- Robust control, H-infinity norm, Distributed control, Networked systems, Spectral value set, Stability radius \smallskip \\
{\it AMS subject classifications} ---  93B36, 93B70, 93A14, 93C23, 93C73
}
\section{Introduction}
This manuscript presents a controller synthesis method for networked systems. Such networked systems consist of a large number of smaller subsystems that interact over a network. The analysis and control of these networked systems is challenging due to their large dimension and the presence of delays. These delays originate from the time needed to transfer matter, energy and information between subsystems. In this context, the traditional approach of using one global controller for the complete network is thus not feasible due to the high communication requirements and the poor scalability with respect to the number of subsystems. Furthermore, the assumption that all measurements are centrally available, does often not hold for networked systems. These limitations inspired local control approaches, in which each subsystem has its own local controller. Neighboring controllers can however communicate to improve control performance. 

In this manuscript we consider networked systems in which the dynamics of the individual subsystems are described by identical linear time-invariant delay differential equations. The resulting local controllers are identical and minimize an upper bound for the \hinfnorm{} of the overall system. This \hinfnorm{} is an important performance measure in robust control theory, see \cite{pa:Zhou1998}. 

The computation cost of the standard algorithms for calculating the \hinfnorm{} of dynamical systems with discrete delays, such as \cite{pa:Gumussoy2011}, scales cubically with respect to the number of states (and hence the number of subsystems). However, for some networked systems this computation cost can be decreased significantly using the decoupling transformation presented in \cite{pa:Massioni2009} and \cite{pa:Dileep2018a}. More specifically, if the subsystems are identical and the graph describing the interconnections between the subsystems fulfills a particular assumption, then the \hinfnorm{} of the complete system is equal to the maximal \hinfnorm{} of a single parametrized subsystem where the allowable values of the parameter correspond to the eigenvalues of the adjacency matrix of the interconnection graph. Moreover, in \cite{pa:Dileep2018} it was suggested to consider this parameter as an uncertainty bounded to a region in the complex plane that comprises all these eigenvalues. As such, the worst-case \hinfnorm{} of this uncertain subsystem gives an upper bound for the \hinfnorm{} of the complete network. Furthermore, this worst-case \hinfnorm{} is also known as the robust \hinfnorm{} and can be computed at a cost that only depends on the dimension of an individual subsystem using the method presented in \cite{pa:Appeltans2019}.

For the controller synthesis, we will directly minimize the robust \hinfnorm{} of the uncertain subsystem in the controller parameters. Our method thus fits in the frequency based, direct optimization framework, used in \cite{pa:Gumussoy2011},\cite{pa:Michiels2011},\cite{pa:Dileep2018} and \cite{pa:Ozer2015}. This framework allows to easily incorporate constraints on the structure of the controller, such as PID or reduced order control. In contrast, $\hinf$-controller design methods based on Ricatti equations and linear matrix inequalities typically give rise to dense controllers with dimensions equal to that of the system. A notable exception is \cite{pa:Hilhorst2015}, which allows to design reduced order controllers.  Another advantage of the direct optimization approach compared to methods based on Ricatti equations and linear matrix inequalities, in particular for systems with delays, is that the obtained results are less conservative. This comes however at the cost of having to solve a non-convex and non-smooth optimization problem.

The remainder of this work is structured as follows. First, Section~\ref{sec:networked_systems} introduces the considered networked systems and details the aforementioned decoupling transformation. Next, a recently developed method to compute the robust \hinfnorm{} of uncertain linear time-invariant systems with discrete delays is discussed in Section~\ref{pa_sec:computing_rob_hinf}. Subsequently, the direct optimization approach to synthesize the controller is outlined in Section~\ref{pa_sec:controller_design}. Finally, the resulting design methodology is illustrated using an example problem in Section~\ref{pa_sec:example} and some concluding remarks are given in Section~\ref{pa_sec:conclusion}.

\section{Computing the \hinfnorm{} of networked systems}
\label{sec:networked_systems}
In Section~\ref{pa_subsec:system_description_control_objective} we introduce the considered networked systems and the control objective. Section~\ref{pa_subsec:decoupling_transformation} presents the decoupling transformation that allows to compute an upper bound for the \hinfnorm{} of the overall system at a computation cost that does not depend on the number of subsystems. 

\subsection{System Description and Control Objective}
\label{pa_subsec:system_description_control_objective}
 Here, we consider networked systems with $N$ subsystems. The dynamics of the individual subsystems are identical and described by a state-space representation of the following form:
\begin{equation}
\label{pa_eq:subsysem}
\left\{
\setlength{\arraycolsep}{2pt}
\def\arraystretch{1.3}
\begin{array}{rcl} 
	\dot{x}_{j}(t) &=& \sum\limits_{k=0}^{K} A_{k}\, x_{j}(t-\tau_k) + B_{u}\, u_{j}(t-\tau_{u}) + B_{u_{n}}\,u^{n}_{j}(t)+ B_w\, w_{j}(t) \\
	y_{j}(t)&=& C_{y_{\phantom{n}}}\, x_{j}(t)\\
	y^{n}_{j}(t)&=& C_{y_{n}}\, x_{j}(t)\\
	z_{j}(t) &=& C_{z_{\phantom{n}}}\, x_{j}(t) \hspace{13.4em} \text{for $j = 1,\dotsc,N$}
\end{array}
\right.
\end{equation}
with $x_{j}(t)\in\mathbb{R}^{n}$ the state vector of subsystem $j$, $u_{j}(t)\in\mathbb{R}^{m_c}$ its control input, $y_{j}(t) \in \mathbb{R}^{p_c}$ its measured output, $w_{j}(t)\in\mathbb{R}^{m}$ its performance input, $z_{j}(t)\in\mathbb{R}^{p}$ its performance output, $ 0 = \tau_0 < \tau_1 < \dots < \tau_K$ discrete delays, $\tau_u\geq0$ an input delay and $A_k$, $B_{u}$, $B_{u_{n}}$, $B_w$, $C_{y}$, $C_{y_{n}}$ and $C_{z}$ \mbox{real-valued} matrices of appropriate dimension. The input $u^{n}_{j}(t)$ and the output $y^{n}_{j}(t)$ model the interactions between the subsystems:
\[
	u^{n}_{j}(t) = \sum_{i=1}^{N} P^{N}_{j,i}~y^{n}_{i}(t-\tau_{n})\text{,}
\] with $\tau_n\geq0$ the interaction delay and $P_{N} = [P^{N}_{j,i}]_{j,i=1}^{N}$ the adjacency matrix of the interconnection graph of the network. More specifically, an element $P^{N}_{j,i}$ is non-zero if and only if the dynamics of subsystem $j$ are influenced by \mbox{subsystem $i$}.  

Each subsystem is controlled using a local controller and these local controllers are identical. Here we will consider dynamic output feedback controllers of order $n_c$: 
\begin{equation}
\label{pa_eq:controller}
\left\{
\def\arraystretch{1.3}
\begin{array}{rcl}
	\dot{\xi}_{j}(t) &=& J_\mathbf{p}~\xi_{j}(t) + F_{\mathbf{p}}~y_{j}(t) +  F^{n}_{\mathbf{p}}~u^{nc}_{j}(t)\\
	u_{j}(t) &=& L_{\mathbf{p}}~\xi_{j}(t) + K_{\mathbf{p}}~y_{j}(t) +  K^{n}_{\mathbf{p}}~ u^{nc}_{j}(t) \qquad \text{for $j = 1,\dots,N$}
\end{array}	
\right.	
\end{equation}
with $\xi_{j}(t) \in \mathbb{R}^{n_{c}}$ the controller state of the local controller associated with subsystem $j$. The matrices $J_\mathbf{p}$, $F_{\mathbf{p}}$, $F^{n}_{\mathbf{p}}$, $ L_{\mathbf{p}}$, $K_{\mathbf{p}}$ and $K^{n}_{\mathbf{p}}$ are real-valued and of appropriated dimension. The subscript $\mathbf{p}$ is used to indicate that these matrices depend on some tunable control parameters $\mathbf{p}$. If $F^{n}_{\mathbf{p}} \neq 0$ and/or $K^{n}_{\mathbf{p}} \neq 0$, the local controllers can communicate their sensor measurements to neighboring subsystems. It is however required that the adjacency matrix of the communication graph is the equal to the adjacency matrix of the interaction graph:
\[
u^{nc}_{j}(t) = \textstyle\sum\limits_{i=1}^{N} P^N_{j,i}~y_{i}(t-\tau_{nc}),
\] 
with $\tau_{nc}\geq0$ the communication delay.

\begin{remark}
	In the remainder of this manuscript we restrict our attention to controller architectures where the local controllers can only share sensor measurements. Note however that the results can be extended to architectures where the local controllers can also share their internal state. 
\end{remark}

By eliminating the control and coupling variables, we find the following state-space description for the closed-loop of the complete networked system:
\begin{equation}
\label{pa:state_space_overall}
\left\{
\def\arraystretch{1.3}
\setlength{\arraycolsep}{2pt}
\begin{array}{rcl}
 \dot{x}(t)& =&  \sum\limits_{k=0}^{K} (I_N \otimes A_k)\,x(t\minus\tau_k) + \left(I_N \otimes B_{u} K_{\mathbf{p}} C_{y}\right)\,x(t\minus\tau_{u}) \, + \\
 && \left(P_{N}\otimes B_{u_{n}}C_{y_{n}}\right)\,x(t\minus\tau_{n}) + \big(I_{N}\otimes B_{u}L_{\mathbf{p}}\big)\,\xi(t\minus\tau_{u}) \,+  \\
 && \left(P_{N}\otimes B_{u}K^{n}_{\mathbf{p}}C_{y}\right)\,x(t\minus\tau_{u}\minus\tau_{nc})  + \left(I_N\otimes B_w\right)\,w(t) \\ 
 \dot{\xi}(t)& =& \left(I_N \otimes J_{\mathbf{p}}\right)\,\xi(t) + (I_N \otimes F_{\mathbf{p}}C_{y})\,x(t) \,+ \\ &&\left(P_N \otimes F^{n}_{\mathbf{p}} C_{y}\right)\,x(t\minus\tau_{nc})\\ 
 z(t)& =& (I_{N} \otimes C_{z})\,x(t)
\end{array}
\right. 
\end{equation}
with $I_N$ the identity matrix of size $N$, $x(t) = [x_1(t)^{T} \dotsm~ x_N(t)^{T}]^{T}$ the combined state, $\xi(t) = [\xi_1(t)^{T} \dotsm~ \xi_N(t)^{T}]^{T}$ the combined controller state, $w(t) = [w_{1}(t)^{T} \dotsm~ w_{N}(t)^{T}]^{T}$ the combined performance input,\linebreak $z(t) = [z_{1}(t)^{T} \dotsm~ z_{N}(t)^{T}]^{T}$ the combined performance output and $\otimes$ the Kronecker product. The corresponding transfer function from $w$ to $z$ is equal to:
\begin{equation}
\label{pa_eq:overall_tf}
\setlength{\arraycolsep}{2pt}
\begin{array}{rcc}
T(s;\mathbf{p},N) & = & \left(I_{N}\otimes \begin{bmatrix}  C_{z}&  0 \end{bmatrix}\right) \left(I_{N(n+n_c)}s \minus  I_{N} \otimes Q_{\mathbf{p}}(s) \minus P_{N} \otimes R_{\mathbf{p}}(s)\right)^{\negative 1}\times \\[7pt]
&& \Big(I_{N}\otimes\begin{bmatrix}
B_w \\
0
\end{bmatrix}\Big)
\end{array}	
\end{equation}
with 
\begin{align*}
Q_{\mathbf{p}}(s) = & \begin{bmatrix}
 A_0 & 0 \\
F_{\mathbf{p}}C_{y} & J_{\mathbf{p}}
\end{bmatrix} + \sum_{k=1}^{K}\begin{bmatrix} A_k & 0 \\
0 & 0
\end{bmatrix}e^{-s \tau_k} +  \begin{bmatrix}
 B_{u} K_{\mathbf{p}} C_{y} & B_{u}L_{\mathbf{p}}\\
 0 & 0
\end{bmatrix} e^{\negative s\tau_{u}} 
\intertext{and} 
R_{\mathbf{p}}(s) = & \setlength{\arraycolsep}{2pt}\begin{bmatrix}
B_{u_{n}}C_{y_{n}} & 0 \\
0 & 0
\end{bmatrix}e^{\negative s\tau_{n}} + \begin{bmatrix}
B_{u}K^{n}_{\mathbf{p}}C_{y} & 0\\
0 & 0
\end{bmatrix}e^{\negative s(\tau_{u}+\tau_{nc})} \, +   \begin{bmatrix}
0 & 0\\
F^{n}_{\mathbf{p}} C_{y} & 0
\end{bmatrix}e^{\negative s\tau_{nc}}.
\end{align*}

If system \eqref{pa:state_space_overall} is exponentially stable, the \hinfnorm{} of \eqref{pa_eq:overall_tf} equals:
\begin{equation*}
\|T(\cdot;\mathbf{p},N)\|_{\mathcal{H}_{\infty}} = \max_{\omega \in \mathbb{R}^{+}} \sigma_1\big(T(\jmath\omega;\mathbf{p},N)\big)
\end{equation*}
with $\sigma_1(\cdot)$ the largest singular value of its matrix argument \cite{pa:Gumussoy2011}. 
Here we recall that the \hinfnorm{} is an important performance measure in robust control theory, used to asses the disturbance rejection of a dynamical system as it gives the worst-case energy gain of the system with respect to energy-bounded noise signals:
\[
\|T(\cdot;\mathbf{p},N)\|_{\mathcal{H}_{\infty}} = \max_{w \in \Ltwo_m} \frac{\|z\|_{\Ltwo_p}}{\|w\|_{\Ltwo_m}},
\]
with $\|w\|_{\Ltwo_m} = \sqrt{\int_{0}^{+\infty} \|w(t)\|_2^{2}\> dt}$, $\Ltwo_m=\{w : [0,+\infty) \mapsto \mathbb{R}^{m} \text{ such that } \linebreak \|w\|_{\Ltwo_m}^{2} < + \infty \}$,   $\|w(t)\|_2$ the Euclidean norm, and $\|z\|_{\Ltwo_p}$ and $\Ltwo_p$ defined analogously \cite{pa:Zhou1998}.

%

\subsection{The robust \hinfnorm{} of subsystem as upper bound for the \hinfnorm{} of the overall network}
\label{pa_subsec:decoupling_transformation}
In this subsection we show that under the following assumption on $P_N$, there exists a decoupling transformation that allows to compute an upper bound for the \hinfnorm{} of \eqref{pa_eq:overall_tf} at a computation cost that does not depend on the number of subsystems.   
\begin{assumption}
	\label{pa_ass:adjacency_matrix_unitary}
	The matrix $P_{N}$ has real-valued eigenvalues confined to an interval $[a,\,b]$ and is diagonalizable by a unitary matrix $V_{N}$, \ie{}
	\[
	{V_{N}}^{H}P_{N}V_{N} = \Lambda_{N} \text{,}
	\]
	with $\Lambda_{N} = \diag(\lambda_1,\lambda_2,\dots,\lambda_N)$ and $\lambda_j\in [a,\,b]$ for $j=1,\dots,N$.
\end{assumption}
If we apply the following change of variables to the states, the controller states, the performance input and the performance output of system \eqref{pa:state_space_overall}
\begin{align*}
	\bar{x}(t) &= ({V_N}^{H} \otimes \rlap{$I_{n}$}\hphantom{I_{n_c}} )~\hphantom{w}\llap{$x$}(t)\\
	\bar{\xi}(t) &= ({V_N}^{H} \otimes I_{n_c})~\hphantom{w}\llap{$\xi$}(t) \\
	\bar{w}(t) &= ({V_N}^{H} \otimes \rlap{$I_{m}$}\hphantom{I_{n_c}} )~w(t) \\
	\bar{z}(t) &= ({V_N}^{H} \otimes \rlap{$I_{p}$}\hphantom{I_{n_c}} )~\hphantom{w}\llap{$z$}(t)
\end{align*}
 we obtain 
\begin{equation}
\label{pa_eq:transformed_system}
\left\{
\setlength{\arraycolsep}{2pt}
\def\arraystretch{1.3}
\begin{array}{rcl}
\dot{\bar{x}}(t)& =&  \sum\limits_{k=0}^{K} (I_N \otimes A_k)\,\bar{x}(t\minus\tau_k)+ (I_N \otimes B_{u}K_{\mathbf{p}}C_{y})\,\bar{x}(t\minus\tau_{u}) + \phantom{x}
\\ && (\Lambda_{N} \otimes B_{u_{n}}C_{y_{n}})\,\bar{x}(t\minus\tau_{n}) + \left(I_{N}\otimes B_{u}L_\mathbf{p}\right)\,\bar{\xi}(t\minus\tau_{u})  + \\
&& (\Lambda_{N}\otimes B_{u}K^{n}_{\mathbf{p}}C_{y})\,\bar{x}(t\minus\tau_{u}\minus\tau_{nc})  + (I_N \otimes B_w)~\bar{w}(t) \\
\dot{\bar{\xi}}(t)& =& \left(I_N \otimes J_\mathbf{p}\right)\, \bar{\xi}(t) + \left(I_N \otimes F_\mathbf{p}C_{y}\right)\,\bar{x}(t)+ \\ &&  \left(\Lambda_{N} \otimes F^{n}_\mathbf{p} C_{y}\right) ~\bar{x}(t-\tau_{nc})\\ 
\bar{z}(t)& =& (I_{N} \otimes C_{z})~\bar{x}(t) \text{.}
\end{array}
\right.
\end{equation}
Notice that all matrices in \eqref{pa_eq:transformed_system} are block diagonal and hence the behavior of this transformed system is fully characterized by its $N$ independent subsystems. This leads to the following theorem.
\begin{theorem}
	\label{th:decoupling_H_infinity}
	For a networked system of form \eqref{pa:state_space_overall} whose adjacency matrix fulfills \Cref{pa_ass:adjacency_matrix_unitary}, it holds that
	
	\begin{equation*}
	\|T(\cdot;\mathbf{p},N)\|_{\mathcal{H}_{\infty}} = \|\bar{T}_{\bar{w}\bar{z}}(\cdot;\mathbf{p},N)\|_{\mathcal{H}_{\infty}} = \max_{\lambda \in \{\lambda_1,\dots,\lambda_N\}} \|\hat{T}_{\hat{w}\hat{z}}(\cdot;\mathbf{p},\lambda) \|_{\mathcal{H}_{\infty}}
	\end{equation*}
	with $\bar{T}_{\bar{w}\bar{z}}(\cdot;\mathbf{p},N)$ the transfer function from $\bar{w}$ to $\bar{z}$ of system \eqref{pa_eq:transformed_system} and \linebreak $\hat{T}_{\hat{w}\hat{z}}(\cdot;\mathbf{p},\lambda)$ the transfer function from $\hat{w}$ to $\hat{z}$ of the following system parameterized in $\lambda$:
	
	\begin{equation}
	\label{pa:decoupled_system}
	\left\{
	\arraycolsep=2pt
	\def\arraystretch{1.4}
	\begin{array}{rcl}
		\dot{\hat{x}}(t)& =& \sum\limits_{k=0}^{K} 
		A_k\,
		\hat{x}(t\minus\tau_k) 
		 + 
		B_{u}K_{\mathbf{p}}C_{y}\,\hat{x}(t\minus\tau_{u}) + \lambda 
		B_{u_{n}}C_{y_{n}}\,\hat{x}(t\minus\tau_{n}) \\
		&& 
		+  \lambda B_{u}K^{n}_{\mathbf{p}}C_{y}\,	\hat{x}(t\minus\tau_{u}\minus\tau_{nc})+B_{u}L_{\mathbf{p}}\,\hat{\xi}(t\minus\tau_{u})+ B_w\,\hat{w}(t)\\
		 \dot{\hat{\xi}}(t)& =& J_{\mathbf{p}}\,\hat{\xi}(t) + F_{\mathbf{p}}C_{y}\,\hat{x}(t) + \lambda F^{n}_{\mathbf{p}}C_{y}\, \hat{x}(t\minus\tau_{nc}) \\
		\hat{z}(t)& =&  C_{z}\,\hat{x}(t) \text{.}
	\end{array}
	\right.
	\end{equation}
\end{theorem}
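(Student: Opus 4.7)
The plan is to treat the two equalities separately. For the first equality, I would invoke the fact that the change of variables passing from \eqref{pa:state_space_overall} to \eqref{pa_eq:transformed_system} is applied only through matrices of the form $V_{N}^{H}\otimes I_{q}$, which are unitary whenever $V_{N}$ is, since the Kronecker product of unitary matrices is unitary. Consequently $\|\bar{w}\|_{\Ltwo_{Nm}}=\|w\|_{\Ltwo_{Nm}}$ and $\|\bar{z}\|_{\Ltwo_{Np}}=\|z\|_{\Ltwo_{Np}}$, and the map $w\mapsto\bar{w}$ is a bijection on $\Ltwo_{Nm}$. Plugging this into the worst-case energy-gain characterization of the $\hinf$-norm recalled just before this subsection gives $\|T(\cdot;\mathbf{p},N)\|_{\hinf}=\|\bar{T}_{\bar{w}\bar{z}}(\cdot;\mathbf{p},N)\|_{\hinf}$ immediately, without needing to compute either norm explicitly.

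For the second equality, the key observation is that every coefficient matrix appearing in \eqref{pa_eq:transformed_system} has the form $I_{N}\otimes M$ or $\Lambda_{N}\otimes M$ with $\Lambda_{N}$ diagonal, so all of them are (after an appropriate reordering of components) block diagonal with $N$ diagonal blocks. I would partition $\bar{x},\bar{\xi},\bar{w},\bar{z}$ into $N$ blocks indexed by $j=1,\dots,N$ and verify by direct inspection that the $j$-th block satisfies exactly \eqref{pa:decoupled_system} with $\lambda=\lambda_{j}$. It then follows that, after a fixed permutation of coordinates, $\bar{T}_{\bar{w}\bar{z}}(s;\mathbf{p},N)=\blkdiag\bigl(\hat{T}_{\hat{w}\hat{z}}(s;\mathbf{p},\lambda_{1}),\dots,\hat{T}_{\hat{w}\hat{z}}(s;\mathbf{p},\lambda_{N})\bigr)$. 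Since the largest singular value of a block diagonal matrix equals the maximum of the largest singular values of its diagonal blocks, taking the supremum over $\omega\in\mathbb{R}^{+}$ commutes with the block-diagonal maximum and yields the claimed maximum over $\{\lambda_{1},\dots,\lambda_{N}\}$.

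I expect the main obstacle to be purely bookkeeping: the Kronecker transformation acts on the subsystem index (left factor) whereas the parameterized subsystem \eqref{pa:decoupled_system} naturally lives in the component index (right factor), so a commutation permutation is implicit throughout when identifying $\bar{T}_{\bar{w}\bar{z}}$ with the direct sum above. I would make this permutation explicit once, note that it is orthogonal (hence $\hinf$-norm preserving), and the remaining algebra becomes routine. A secondary point worth checking is that the exponential stability hypothesis underlying the $\max_{\omega}\sigma_{1}$ representation of the $\hinf$-norm carries over from \eqref{pa:state_space_overall} to each decoupled subsystem \eqref{pa:decoupled_system} at $\lambda=\lambda_{j}$; this follows from the block-diagonal form of the characteristic matrix after transformation, which shows that the spectrum of the overall closed loop is the union of the spectra of the decoupled subsystems.
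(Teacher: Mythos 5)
Your proposal is correct and, for the second equality, coincides with the paper's argument: after the change of variables every coefficient matrix is block diagonal, so $\bar{T}_{\bar{w}\bar{z}}(\jmath\omega;\mathbf{p},N)=\blkdiag_{j=1,\dots,N}\big(\hat{T}_{\hat{w}\hat{z}}(\jmath\omega;\mathbf{p},\lambda_j)\big)$ and $\sigma_1$ of a block-diagonal matrix is the maximum of the $\sigma_1$'s of its blocks. For the first equality the paper stays in the frequency domain: it writes $T(\jmath\omega;\mathbf{p},N)=\left(V_N\otimes I_p\right)\bar{T}_{\bar{w}\bar{z}}(\jmath\omega;\mathbf{p},N)\left(V_N^{H}\otimes I_m\right)$ and invokes invariance of $\sigma_1$ under multiplication by unitary matrices, whereas you argue in the time domain via preservation of signal energy. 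Both routes rest on the same unitarity fact, but yours has one wrinkle to patch: $V_N$ is only assumed unitary, not real orthogonal, so $\bar{w}(t)=(V_N^{H}\otimes I_m)\,w(t)$ is in general complex-valued and the map $w\mapsto\bar{w}$ is not a bijection of the real-valued space $\Ltwo_{Nm}$ as defined in the paper. You would need to pass to complex-valued $\Ltwo$ signals and note that the induced gain is unchanged (which again reduces to $\sup_\omega\sigma_1$), or simply use the frequency-domain singular-value identity directly as the paper does. Your remaining remarks --- that the only reordering needed is the interleaving of $(\hat{x}_j,\hat{\xi}_j)$ versus $(\bar{x},\bar{\xi})$, which does not affect the transfer function from $\bar{w}$ to $\bar{z}$, and that exponential stability of \eqref{pa:state_space_overall} is equivalent to that of all decoupled subsystems because the characteristic matrix block-diagonalizes --- are correct and make explicit two points the paper leaves implicit.
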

\begin{proof}
	The provided proof is added for self-containedness and is similar to the ones given in \cite{pa:Massioni2009} and \cite{pa:Dileep2018a}. We refer to these papers for more details. \\ 
	The relation between $T(\jmath\omega;\mathbf{p},N)$ and $\bar{T}_{\bar{w}\bar{z}}(\jmath\omega;\mathbf{p},N)$ is given by
	\[
	T(\jmath\omega;\mathbf{p},N) = \left({V_{N}}\otimes I_p\right)~\bar{T}_{\bar{w}\bar{z}}(\jmath\omega;N)~\left({V_{N}}^{H} \otimes I_m\right)\text{.}
	\]
	Because $({V_{N}}\otimes I_p)$ and $({V_{N}}^{H}\otimes I_m)$ are unitary matrices if follows that,  
	\[
	\begin{aligned}
	\sigma_1\big(T(\jmath\omega;\mathbf{p},N)\big) &= \sigma_1\Big(\left({V_{N}}\otimes I_p\right)~\bar{T}_{\bar{w}\bar{z}}(\jmath\omega;\mathbf{p},N)~\left({V_{N}}^{H} \otimes I_m\right)\Big) \\ &= \sigma_1\left(\bar{T}_{\bar{w}\bar{z}}(\jmath\omega;\mathbf{p},N)\right).
	\end{aligned}
	\] 
	The second equality follows from the fact that 
	\[
	\bar{T}_{\bar{w}\bar{z}}(\jmath\omega;\mathbf{p},N) = \blkdiag_{j=1,\dots,N}\big(\hat{T}_{\hat{w}\hat{z}}(\jmath\omega;\mathbf{p},\lambda_j)\big) \text{,}
	\]
	and hence
	\[
	\sigma_1\left(\bar{T}_{\bar{w}\bar{z}}(\jmath\omega;\mathbf{p},N)\right) = \max_{\lambda\in\{\lambda_1,\dots,\lambda_N\}} \sigma_1\big(\hat{T}_{\hat{w}\hat{z}}(\jmath\omega;\mathbf{p},\lambda)\big),
	\]
	which concludes the proof.
\end{proof}
	Note that system \eqref{pa:decoupled_system} corresponds to a single subsystem in \eqref{pa:state_space_overall} where the network connections are replaced by a parameter.
By treating $\lambda$ in \eqref{pa:decoupled_system} as an uncertainty confined to the interval $[a,\,b]$, the robust \hinfnorm{} associated with \eqref{pa:decoupled_system}, which is defined as the maximal value of the \hinfnorm{} over all instances of the uncertain parameter,
\begin{equation}
\label{pa_eq:rob_hinf_decoupled}
\|\hat{T}_{\hat{w}\hat{z}}(\cdot;\mathbf{p},\cdot)\|_{\mathcal{H}_{\infty}}^{[a,\,b]} = \max_{\lambda \in [a,\,b]} \|\hat{T}_{\hat{w}\hat{z}}(\cdot;\mathbf{p},\lambda)\|_{\mathcal{H}_{\infty}}\text{,}
\end{equation}
can be used as an upper bound for the \hinfnorm{} of \eqref{pa:state_space_overall}, as stated in the following corollary.
\begin{corollary}
	\label{pa_cor:upperbound_hinfnorm}
	The $\mathcal{H}_{\infty}$-norm of a networked system of form \eqref{pa:state_space_overall} whose adjacency matrix fulfills \Cref{pa_ass:adjacency_matrix_unitary}, is upper bounded by the robust $\mathcal{H}_{\infty}$-norm of $\hat{T}_{\hat{w}\hat{z}}(\cdot;\mathbf{p},\lambda)$, with $\lambda$ an uncertain parameter confined to $[a,\,b]$:
	\[
	\|T(\cdot;\mathbf{p},N)\|_{\mathcal{H}_{\infty}}  \leq  \|\hat{T}_{\hat{w}\hat{z}}(\cdot;\mathbf{p},\cdot)\|_{\mathcal{H}_{\infty}}^{[a,\,b]}.
	\]
	Furthermore, if \Cref{pa_ass:adjacency_matrix_unitary} holds with $a$ and $b$ independent of $N$, then \linebreak $\|\hat{T}_{\hat{w}\hat{z}}(\cdot;\mathbf{p},\cdot)\|_{\mathcal{H}_{\infty}}^{[a,\,b]}$ is also an upper bound for the supremum of  $\|T(\cdot;\mathbf{p},N)\|_{\mathcal{H}_{\infty}}$ over the number of subsystems:
	\[
	\sup_{N=1,\dots,+\infty} \|T(\cdot;\mathbf{p},N)\|_{\mathcal{H}_{\infty}} \leq  \|\hat{T}_{\hat{w}\hat{z}}(\cdot;\mathbf{p},\lambda) \|_{\mathcal{H}_{\infty}}^{[a,\,b]}.
	\]
	If, furthermore, the (Hausdorff) distance between $[a,\,b]$ and \linebreak $\bigcup\limits_{N=1}^{+\infty} \left\{\lambda\in \mathbb{C} :  \det(I_{N}\lambda-P_{N}) = 0\right\}$ goes to zero then
	\[
	\sup_{N=1,\dots,+\infty} \|T(\cdot;\mathbf{p},N)\|_{\mathcal{H}_{\infty}} =  \|\hat{T}_{\hat{w}\hat{z}}(\cdot;\mathbf{p},\lambda) \|_{\mathcal{H}_{\infty}}^{[a,\,b]} \text{.}
	\]
\end{corollary}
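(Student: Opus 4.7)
The first inequality is immediate from Theorem~\ref{th:decoupling_H_infinity}: since each $\lambda_j$ belongs to $[a,b]$, the finite maximum $\max_{\lambda\in\{\lambda_1,\dots,\lambda_N\}}\|\hat{T}_{\hat{w}\hat{z}}(\cdot;\mathbf{p},\lambda)\|_{\mathcal{H}_{\infty}}$ is bounded above by the supremum over the entire interval, which by definition~\eqref{pa_eq:rob_hinf_decoupled} equals $\|\hat{T}_{\hat{w}\hat{z}}(\cdot;\mathbf{p},\cdot)\|_{\mathcal{H}_{\infty}}^{[a,b]}$. Because this bound is independent of $N$ when $a$ and $b$ are, taking the supremum over $N$ on the left yields the second assertion.

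For the equality under the Hausdorff--density hypothesis, my plan is to establish the reverse inequality by a density-plus-continuity argument. Let
\[
\varphi(\lambda) := \|\hat{T}_{\hat{w}\hat{z}}(\cdot;\mathbf{p},\lambda)\|_{\mathcal{H}_{\infty}}, \qquad \lambda\in[a,b],
\]
and let $\Sigma := \bigcup_{N\geq 1}\{\lambda\in\mathbb{C}:\det(I_N\lambda-P_N)=0\}\subseteq[a,b]$. Theorem~\ref{th:decoupling_H_infinity} gives $\sup_N\|T(\cdot;\mathbf{p},N)\|_{\mathcal{H}_{\infty}}=\sup_{\lambda\in\Sigma}\varphi(\lambda)$. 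If $\varphi$ attains its maximum on $[a,b]$ at some $\lambda^\star$, the Hausdorff condition supplies a sequence $\lambda^{(k)}\in\Sigma$ with $\lambda^{(k)}\to\lambda^\star$. Once continuity of $\varphi$ at $\lambda^\star$ is established, $\varphi(\lambda^{(k)})\to\varphi(\lambda^\star)=\max_{[a,b]}\varphi$, so
\[
\sup_{N}\|T(\cdot;\mathbf{p},N)\|_{\mathcal{H}_{\infty}} \;\geq\; \sup_{k}\varphi(\lambda^{(k)}) \;=\; \|\hat{T}_{\hat{w}\hat{z}}(\cdot;\mathbf{p},\cdot)\|_{\mathcal{H}_{\infty}}^{[a,b]},
\]
which combined with the second part of the corollary gives equality.

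The main obstacle is therefore the continuity of $\varphi$. I would argue it as follows. Writing out \eqref{pa:decoupled_system} in transfer-function form, the mapping $(\omega,\lambda)\mapsto\hat{T}_{\hat{w}\hat{z}}(\jmath\omega;\mathbf{p},\lambda)$ is jointly continuous on $\mathbb{R}\times[a,b]$ at every $\lambda$ for which $\hat{T}_{\hat{w}\hat{z}}(\cdot;\mathbf{p},\lambda)$ is exponentially stable, because it is a rational-in-$\jmath\omega$-and-$e^{-\jmath\omega\tau_\ell}$ expression depending affinely on $\lambda$. Implicit in the statement $\|T(\cdot;\mathbf{p},N)\|_{\mathcal{H}_{\infty}}<\infty$ for all $N$ used on the left is that each subsystem in the block-diagonalization~\eqref{pa_eq:transformed_system} is exponentially stable, so the supremum defining $\varphi(\lambda)$ is finite for every $\lambda\in\Sigma$; to close the argument cleanly one either assumes exponential stability uniformly in $\lambda\in[a,b]$ (in which case $\sigma_1(\hat T_{\hat w\hat z}(\jmath\omega;\mathbf{p},\lambda))\to 0$ as $|\omega|\to\infty$ uniformly, and $\varphi$ is continuous on $[a,b]$ by a standard compactness-plus-uniform-convergence argument), or else one notes that if $\varphi$ blows up at some $\lambda^\star$ then by lower semicontinuity it blows up along any sequence $\lambda^{(k)}\to\lambda^\star$, which still produces the desired lower bound on $\sup_N\|T(\cdot;\mathbf{p},N)\|_{\mathcal{H}_{\infty}}$. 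Either way the equality follows.
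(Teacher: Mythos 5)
Your proof is correct and matches the intended argument: the paper states this corollary without a proof, treating the first two claims as immediate consequences of Theorem~\ref{th:decoupling_H_infinity} and the definition \eqref{pa_eq:rob_hinf_decoupled}, and your density-plus-continuity argument (using that $\lambda\mapsto\|\hat{T}_{\hat{w}\hat{z}}(\cdot;\mathbf{p},\lambda)\|_{\mathcal{H}_{\infty}}$ is continuous on $[a,\,b]$ under the standing assumption of exponential stability for all $\lambda\in[a,\,b]$, so that its supremum over a dense subset equals its maximum over the interval) is exactly the step the final equality rests on. The only caveat is that the unstable/blow-up branch of your continuity discussion is unnecessary here, since the robust \hinfnorm{} on the right-hand side is only defined under uniform exponential stability on $[a,\,b]$ in the first place.
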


\begin{example}
	\label{pa_ex:adjacency}
	To illustrate the applicability of this result, we consider the following adjacency matrices:
	\[
		P_N^{\,\text{ring}} = 	\setlength{\arraycolsep}{2pt} \begin{bmatrix}
		0 & 0.5 &  &  & &  0.5 \\
		0.5 & 0 & 0.5 &  &  &  \\
		 & 0.5 & 0 & 0.5 &  &  \\
		 &  & \ddots & \ddots & \ddots &  \\
		 &  &  & 0.5 & 0 & 0.5 \\
		0.5 &  &  &  & 0.5 & 0
		\end{bmatrix} \text{ and }
		P_N^{\,\text{line}} = \begin{bmatrix}
		0 & 0.5 &  &  &  &   \\
		0.5 & 0 & 0.5 &  &  &  \\
		 & 0.5 & 0 & 0.5 &   & \\
		 &  & \ddots & \ddots & \ddots &  \\
		 &  &  & 0.5 & 0 & 0.5 \\
		 &  &  &  & 0.5 & 0
		\end{bmatrix}.\hfil
	\]
	 The first adjacency matrix, $P_N^{\text{ring}}$, corresponds to a bidirectional ring topology,  see \Cref{pa_fig:ring_network}; the second one corresponds to a bidirectional line topology, see \Cref{pa_fig:line_network}. The eigenvalues of these adjacency matrices are \linebreak $\left\{\cos\left(\frac{2\pi (j-1)}{N}\right)\right\}_{j=1}^{\lfloor\frac{N+2}{2}\rfloor}$ and $\left\{\cos\left(\frac{j\pi}{N+1}\right)\right\}_{j=1}^{N}$, respectively. The eigenvalues of both matrices are thus confined to the interval $[a,\ b] = [-1,\ 1]$ for all $N > 1$. We can therefore apply \Cref{pa_cor:upperbound_hinfnorm} to compute an upper bound for the \hinfnorm{} of \eqref{pa_eq:overall_tf} that holds for all $N > 1$ at a computation cost that only depends on the dimension of a single subsystem. Furthermore, this upper bound is the same for both topologies.
	 
	\begin{figure}
		\begin{minipage}{0.49\linewidth}
			\centering
			\begin{tikzpicture}
			\node[circle,draw,minimum size=35pt] (1) at (60:2){1};
			\node[circle,draw,minimum size=35pt] (2) at (0:2){2};
			\node[circle,draw,minimum size=35pt] (3) at (-60:2){3};
			\node[circle,draw,minimum size=35pt] (4) at (-120:2){\dots};
			\node[circle,draw,minimum size=35pt] (5) at (180:2){$N\minus1$};
			\node[circle,draw,minimum size=35pt] (6) at (120:2){$N$};
			\draw[<->,bend left=90] (1) -- (2);
			\draw[<->] (2) -- (3);
			\draw[<->] (3) -- (4);
			\draw[<->] (4) -- (5);
			\draw[<->] (5) -- (6);
			\draw[<->] (6) -- (1);
			\end{tikzpicture}
		\end{minipage}\hfill
		\begin{minipage}{0.49\linewidth}
			\centering
			\begin{tikzpicture}
			\node[circle,draw,minimum size=35pt] (1) at (0,0){1};
			\node[circle,draw,minimum size=35pt] (2) at (2,0){\dots};
			\node[circle,draw,minimum size=35pt] (3) at (4,0){$N$};
			\draw[<->] (1) -- (2);
			\draw[<->] (2) -- (3);
			\end{tikzpicture}
		\end{minipage}
	\begin{minipage}{.45\textwidth}
		\caption{Bidirectional ring topology}
		\label{pa_fig:ring_network}
	\end{minipage}\hfill
	\begin{minipage}{.45\textwidth}
	\caption{Bidirectional line topology}\label{pa_fig:line_network}
	\end{minipage}
	\end{figure}
\end{example}

\section{Computing the robust \hinfnorm{}}
\label{pa_sec:computing_rob_hinf}
This section introduces a numerical algorithm to efficiently compute the robust \hinfnorm{} of an uncertain linear time-invariant system with discrete delays:
\begin{equation}
\label{pa_eq:uncertain_system}
\left\{
\begin{array}{rcl}
\dot{x}(t) &=& \sum\limits_{r=0}^{R} \left(H_r+ \lambda~G_r \right)~x(t-\tau_r) + B_{w}~w(t) \\ [9pt]
z(t) &=& C_{z} x(t)
\end{array}
\right.
\end{equation}
with $x(t)\in\mathbb{R}^{n}$ the state, $w(t)\in\mathbb{R}^{m}$ performance input, $z(t)\in\mathbb{R}^{p}$ the performance output, $0 = \tau_0 < \tau_1 < \dots < \tau_R$ discrete delays, $H_r$, $G_r$, $B_{w}$ and $C_{z}$ real-valued matrices of appropriate dimension and $\lambda$ a real-valued, scalar parameter addressed as an uncertainty confined to the interval $[a,\,b]$. Note that system \eqref{pa:decoupled_system} fits this form.

Under the assumption that system \eqref{pa_eq:uncertain_system} is internally exponentially stable for all $\lambda \in [a,\,b]$, its (asymptotic) input-output behavior for each allowable value of $\lambda$ is described in the Laplace domain  by the following transfer function:
\[T(s;\lambda) = C_{z}\Big(I_n s-\sum\limits_{r=0}^{R} \left(H_r +\lambda G_r\right)~e^{-s\tau_r} \Big)^{-1}B_w\text{,}\] 
and the associated robust $\mathcal{H}_{\infty}$-norm is equal to
\begin{equation}
\label{pa_eq:rob_hinfnorm}
\|T(\cdot;\cdot)\|_{\mathcal{H}_{\infty}}^{[a,\,b]} = \max_{\lambda \in [a,\,b]} \|T(\cdot;\lambda)\|_{\mathcal{H}_{\infty}} = \max_{\substack{\lambda \in [a,\,b]\\ \omega \in \mathbb{R}^{+}}} \sigma_1\big(T(\jmath\omega;\lambda)\big)\text{.}
\end{equation}

In \cite{pa:Appeltans2019} a novel numerical algorithm to compute the robust $\mathcal{H}_{\infty}$-norm of an uncertain time-delay system is presented. This algorithm is based on the relation between the robust $\hinf$-norm and the robust stability radius of an ``uncertain'' characteristic matrix. This relation is illustrated in \Cref{pa_subsec:dist_ins}. The resulting algorithm is given in \Cref{pa_subsec:numerical_algorithm}.

\subsection{Relation with the robust stability radius}
\label{pa_subsec:dist_ins}
 Consider the following ``uncertain'' characteristic matrix
 \begin{equation}
 \label{pa_eq:uncertain_DEP}
 M(s;\lambda,\Delta) := I_n s  - \textstyle\sum\limits_{r=0}^{R} \left(H_r + \lambda G_r \right)~e^{-s \tau_r} - B_w\Delta C_z
 \text{,}
 \end{equation}
with $H_r$, $G_r$, $B_w$, $C_z$, $\tau_r$ and $\lambda$ as defined above, $I_n$  the identity matrix of size $n$ and $\Delta \in \mathbb{C}^{m \times p}$ a complex-valued uncertainty with $\|\Delta\|_2 \leq \varepsilon$ and $\varepsilon\geq 0$. Note that this uncertain characteristic matrix has two uncertainties: a scalar $\lambda$ which is real-valued and bounded to the interval $[a,\,b]$ and a $m\times p$ matrix $\Delta$ which is complex-valued and bounded in spectral norm by $\varepsilon$, or in other words $\Delta\in \mathcal{B}_{\|\cdot\|_2\leq\varepsilon}^{\mathbb{C}^{m\times p}}$ with \[\mathcal{B}_{\|\cdot\|_2\leq\varepsilon}^{\mathbb{C}^{m\times p}} := \left\{\Delta \in \mathbb{C}^{m\times p} : \|\Delta\|_2 \leq \varepsilon \right\}.\] 

Next, we define three important concepts related to this uncertain characteristic matrix. The spectral value set of this uncertain characteristic matrix is defined as
\[
\Lambda_{\varepsilon}^{[a,\,b]} := \bigcup_{\lambda\in[a,\,b]} ~\bigcup_ {\Delta\in  \mathcal{B}_{\|\cdot\|_2\leq\varepsilon}^{\mathbb{C}^{m\times p}}}\left\{s \in \mathbb{C} :\det\left(M(s;\lambda,\Delta)\right) = 0 \right\}.
\]
 Note that this spectral value set is symmetric with respect to the imaginary axis. The pseudo-spectral abscissa is defined as the real part of the right-most point, i.e. the point with the largest real part, in this spectral value set,
\[
\alpha_{\varepsilon}^{[a,\,b]} := \max\left\{\Re\left(s\right): s \in \Lambda_{\varepsilon}^{[a,\,b]} \right\} \text{.}
\]
Finally, the robust stability radius is defined as the smallest $\varepsilon$ for which this this pseudo-spectral abscissa becomes non-negative,
\begin{equation}
\label{pa_eq:dist_ins}
r_{[a,\,b]} := \min\left\{\varepsilon\in \mathbb{R}_{\geq 0}: \alpha_{\varepsilon}^{[a,\,b]} \geq  0 \right\} \text{.}
\end{equation}
Furthermore, because $\alpha_{\varepsilon}^{[a,\,b]}$ is a continuous function of $\varepsilon$ (this can be shown using a similar argument as in \cite[Section~IV]{pa:Borgioli2019}), the transition to a non-negative pseudo-spectral abscissa is characterized by an $\varepsilon$ for which $\alpha_{\varepsilon}^{[a,\,b]}$ equals zero. This means that the robust stability radius can also be defined as the smallest $\varepsilon$ for which the spectral value set touches the imaginary axis:
\begin{equation}
\label{pa_eq:dist_ins2}
r_{[a,\,b]} = \min\left\{\varepsilon\in \mathbb{R}_{\geq 0}: \exists\, \omega \in \mathbb{R}^{+} \text{~such that~} \jmath\omega \in \Lambda_{\varepsilon}^{[a,\,b]} \right\}.
\end{equation}
The following example illustrates these three concepts in more detail.
\begin{example}
	Consider the following uncertain characteristic matrix:
	\footnotesize
	\begin{equation}
	\label{pa_eq:example_characteristic_matrix}
	\begin{array}{c}
	\setlength{\arraycolsep}{2pt}
	 \begin{bmatrix}
	 1 & 0 \\
	 0 & 1
	 \end{bmatrix} s\minus \! \left(\begin{bmatrix}
	 \negative 5 & 3\\ 2 & \negative 6
	 \end{bmatrix}\!+
	 \lambda 
	 \begin{bmatrix}
	 2 & 2 \\ 
	 \negative 2 & \negative 1
	 \end{bmatrix}
	 \right)\!
	 \minus\!
	 \left(
	 \begin{bmatrix}
	 \negative 3 & \negative 1 \\ 
	 0 & 2
	 \end{bmatrix}\!
	 + \lambda
	 \begin{bmatrix}
	 1  & 1 \\ 
	 \negative 1 & 1
	 \end{bmatrix}
	 \right) e^{\negative s} 
	 \minus \begin{bmatrix}
	 1 \\
	 \negative 3
	 \end{bmatrix}
	 \Delta
	 \begin{bmatrix}
	 2 & 5
	 \end{bmatrix}.
	 \end{array}
	\end{equation}
	\normalsize
	\Cref{pa_fig:spectral_value_set} shows the part of the spectral value set in the region $[-0.5,-0.2]\times\jmath[2.1,2.7]$ for $[a,\,b]$ equal to $[-1,\,1]$ and several values of $\varepsilon$. For $\varepsilon = 0$, only the real-valued uncertainty $\lambda$ plays a role and the spectral value set is a curve. For nonzero $\varepsilon$, also the complex-valued uncertainty $\Delta$ affects the characteristic matrix and the spectral value set becomes a region in the complex plane which grows as $\varepsilon$ increases. \Cref{pa_fig:pseudo_spectral_abscissa} shows the pseudo-spectral abscissa $\alpha_{\varepsilon}^{[a,\,b]}$ in function of $\varepsilon$. We find that $r_{[-1,1]} = 0.22491$. Finally, \Cref{pa_fig:spectral_value_set2} shows the part of the associated spectral value sets in the region $[-3,0.3]\times \jmath [-10,10]$. One sees that the spectral value set touches the imaginary axis at the origin ($\omega = 0$).
		\begin{figure}[!htbp]
		\centering
		\includegraphics[width=0.5\linewidth]{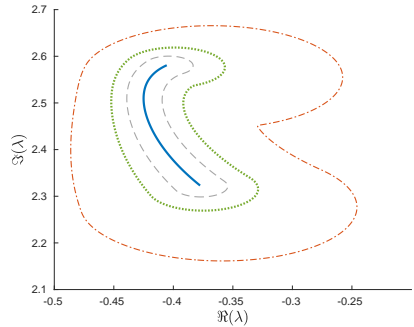}
		\caption{The part of the spectral value set of \eqref{pa_eq:example_characteristic_matrix} in the region $[-0.5,-0.2]\times\jmath[2.1,2.7]$ for $[a,\,b]$ equal to $[-1,\,1]$ and $\varepsilon$ equal to $0$ (full line), $0.05$ (dashed line), $0.1$ (dotted line) and $2/9$ (dash dotted line).}
		\label{pa_fig:spectral_value_set}
	\end{figure} 
	\begin{figure}
		\centering
		\begin{minipage}[b]{0.49\linewidth}
			\centering
			\includegraphics[width=0.9\linewidth]{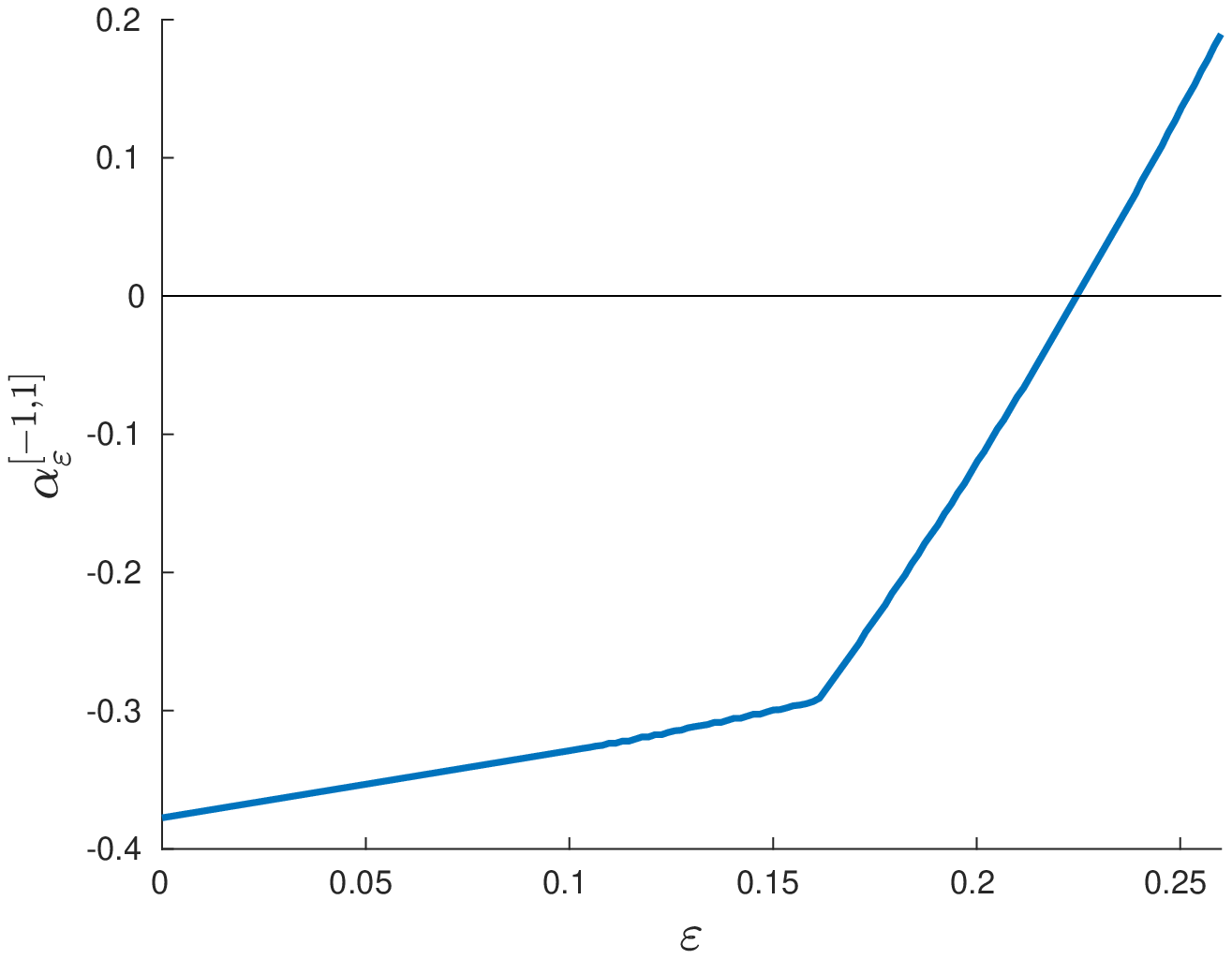}
		
		\end{minipage}\hfill
		\begin{minipage}[b]{0.49\linewidth}
			\centering
			\includegraphics[width=0.9\linewidth]{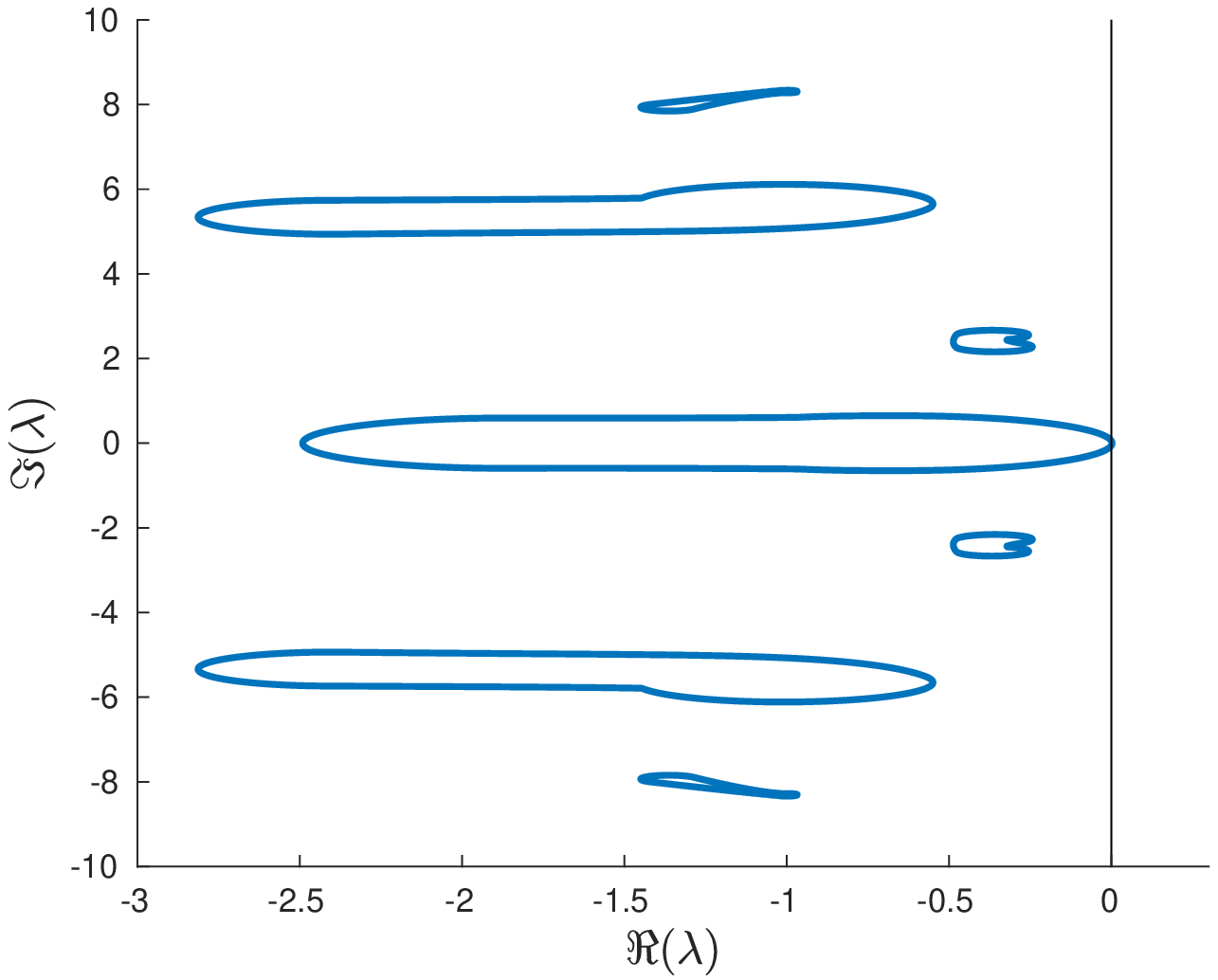}
			
		\end{minipage}\\[-7pt]
		\begin{minipage}[t]{0.45\linewidth}
				\caption{The pseudo-spectral abscissa of \eqref{pa_eq:example_characteristic_matrix} for $[a,\,b]$ equal to $[-1,\,1]$ in function of $\varepsilon$.}
			\label{pa_fig:pseudo_spectral_abscissa}
		\end{minipage}\hfill
			\begin{minipage}[t]{0.45\linewidth}
	\caption{The part of the spectral value set of \eqref{pa_eq:example_characteristic_matrix} in $[-3,0.3]\times\jmath[-10,10]$ for $[a,\,b]$ equal to $[-1,\,1]$ and $\varepsilon = 0.22491$.}
	\label{pa_fig:spectral_value_set2}
	\end{minipage}
	\end{figure}
\end{example}

We are now ready to state the relation between the robust $\mathcal{H}_{\infty}$-norm associated with uncertain system \eqref{pa_eq:uncertain_system} and the robust stability radius of \eqref{pa_eq:uncertain_DEP}.

\begin{theorem}
	\label{pa_th:rob_hinf_dist_ins}
	If uncertain system \eqref{pa_eq:uncertain_system} is internally exponentially stable for all $\lambda \in [a,\,b]$, its associated robust $\mathcal{H}_{\infty}$-norm is equal to the reciprocal of the robust stability radius of \eqref{pa_eq:uncertain_DEP}.
\end{theorem}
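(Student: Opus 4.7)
The plan is to cast the identity as a parametric extension of the classical statement that the $\hinf$-norm of an exponentially stable system equals the reciprocal of its complex stability radius, and then to commute the optimisations over $\lambda$ that appear on both sides.

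First, I would fix $\omega \in \mathbb{R}$ and $\lambda \in [a,b]$. Because \eqref{pa_eq:uncertain_system} is internally exponentially stable for this $\lambda$, the unperturbed characteristic matrix $A(\jmath\omega;\lambda) := I_n\,\jmath\omega - \sum_{r=0}^{R}(H_r+\lambda G_r)\,e^{-\jmath\omega\tau_r}$ is invertible. Applying Sylvester's determinant identity to \eqref{pa_eq:uncertain_DEP} gives
\begin{equation*}
\det M(\jmath\omega;\lambda,\Delta)
\;=\;
\det A(\jmath\omega;\lambda)\,\det\bigl(I_p - C_z\,A(\jmath\omega;\lambda)^{-1}B_w\,\Delta\bigr),
\end{equation*}
so $\jmath\omega$ lies in the spectral value set at parameters $(\lambda,\varepsilon)$ if and only if some complex $\Delta$ with $\|\Delta\|_2 \leq \varepsilon$ makes $I_p - T(\jmath\omega;\lambda)\,\Delta$ singular.

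Second, I would invoke the standard characterisation of the smallest destabilising complex matrix: for $T(\jmath\omega;\lambda)\neq 0$, the minimum of $\|\Delta\|_2$ over $\Delta\in\mathbb{C}^{m\times p}$ making $I_p - T(\jmath\omega;\lambda)\,\Delta$ singular equals $1/\sigma_1\bigl(T(\jmath\omega;\lambda)\bigr)$. The lower bound follows because singularity forces $1$ to be an eigenvalue of $T\Delta$, so $1 \leq \sigma_1(T\Delta)\leq \sigma_1(T)\,\|\Delta\|_2$; the matching upper bound is realised by the rank-one perturbation $\Delta = v_1 u_1^{H}/\sigma_1(T(\jmath\omega;\lambda))$, where $u_1,v_1$ are the dominant singular vectors. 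The degenerate points where $T(\jmath\omega;\lambda)=0$ contribute $+\infty$ to the inner minimum and drop out automatically.

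Third, I would exchange the orders of the optimisations. Using the equivalent formulation \eqref{pa_eq:dist_ins2},
\begin{align*}
r_{[a,\,b]}
&= \min_{\substack{\lambda\in[a,b]\\ \omega \in \mathbb{R}^{+}}}\,
\min\bigl\{\varepsilon \geq 0 : \exists\,\Delta,\ \|\Delta\|_2\leq \varepsilon,\ \det M(\jmath\omega;\lambda,\Delta)=0\bigr\} \\
&= \min_{\substack{\lambda\in[a,b]\\ \omega\in\mathbb{R}^{+}}} \frac{1}{\sigma_1\bigl(T(\jmath\omega;\lambda)\bigr)}
\;=\;\frac{1}{\displaystyle\max_{\substack{\lambda\in[a,b]\\ \omega\in\mathbb{R}^{+}}}\sigma_1\bigl(T(\jmath\omega;\lambda)\bigr)},
\end{align*}
and the denominator equals $\|T(\cdot;\cdot)\|_{\hinf}^{[a,\,b]}$ by \eqref{pa_eq:rob_hinfnorm}. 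Taking reciprocals delivers the claim.

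The main obstacle is making the attainment and well-posedness of these extrema rigorous. Compactness of $[a,b]$ together with joint continuity of $\sigma_1\bigl(T(\jmath\omega;\lambda)\bigr)$ handles the $\lambda$-maximisation; for $\omega$ one needs the high-frequency decay $\sigma_1\bigl(T(\jmath\omega;\lambda)\bigr) = \mathcal{O}(1/|\omega|)$ as $|\omega|\to\infty$, uniformly over $\lambda\in[a,b]$. This uniformity follows from the assumed internal exponential stability over the compact $\lambda$-interval, which keeps the spectrum of $A(\cdot;\lambda)$ in a common left half-plane and makes $A(\jmath\omega;\lambda)^{-1}$ uniformly bounded on any compact frequency set; hence the supremum is realised on a bounded range of $\omega$, and the interchange of $\min$ and $\max$ above is justified.
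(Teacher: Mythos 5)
Your proposal is correct and follows essentially the same route as the paper's proof: the Weinstein--Aronszajn/Sylvester determinant identity reduces $\det M(\jmath\omega;\lambda,\Delta)=0$ to $\det\bigl(I-T(\jmath\omega;\lambda)\Delta\bigr)=0$, the Packard--Doyle characterisation of the smallest destabilising $\Delta$ gives $1/\sigma_1$, and reordering the minimisations yields the reciprocal identity. The only differences are cosmetic: you prove the minimal-norm-perturbation lemma inline (the paper cites \cite{pa:Packard1993}) and you add a brief attainment argument that the paper leaves implicit.
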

\begin{proof}
	The following proof is a simplification of the result in \cite{pa:Appeltans2019}.  	\\
	A complex number $\jmath\omega$ lies in $\Lambda_{\varepsilon}^{[a,\,b]}$ if and only if there exist $\lambda \in [a,\,b]$ and $\Delta\in\mathcal{B}_{\|\cdot\|_2\leq\varepsilon}^{\mathbb{C}^{m\times p}}$ such that
	\[	
	\det\big(M(\jmath \omega;\lambda,\Delta)\big) = \det\Big(I_n\jmath\omega - \textstyle\sum_{r=0}^{R} (H_r+ \lambda~ G_r ) e^{-\jmath\omega \tau_r}- B_w\Delta C_z\Big) = 0 \text{.}
	\]
	Because we required that \eqref{pa_eq:uncertain_system} is internally exponentially stable for all $\lambda \in [a,\,b]$, this is equivalent with
	\[
	\det\Big(I_n-\big(I_n\jmath\omega - \textstyle\sum_{r=0}^{R} (H_r+ \lambda G_r )  e^{-\jmath\omega \tau_r}\big)^{-1} B_w\Delta C_z \Big) = 0 \text{.}
	\]
	By the Weinstein-Aronszajn identity, this last equality can be rewritten as
	\[
	\begin{aligned}
	\det\Big(I\minus C_z\big(I\jmath\omega \minus \textstyle\sum_{r=0}^{R} (H_r+ \lambda G_r )  e^{\negative\jmath\omega \tau_r}\big)^{\negative 1} B_w\Delta  \Big) &= \det\big(I\minus T(\jmath\omega;\lambda)\Delta  \big) \\ &= 0.	
	\end{aligned}
	\]
	The characterization of the robust stability radius in \eqref{pa_eq:dist_ins2} can thus be rewritten as 
	\[
	r_{[a,\,b]} = \min_{\omega \in \mathbb{R}^{+}}~\min_{\lambda\in[a,\,b]}~ \min_{\Delta \in \mathbb{C}^{m \times p}}\Big\{\|\Delta\|_2: \det\big(I-T(\jmath\omega;\lambda)\Delta  \big) = 0 \Big\} \text{.}
	\]
	Using $\min_{\Delta \in \mathbb{C}^{m \times p}} \{\|\Delta\|_2: \det\left(I-M\Delta  \right) = 0 \} = \sigma_1\left(M\right)^{-1}$ from \cite{pa:Packard1993} one finds that
	\[
	\begin{aligned}
	r_{[a,\,b]} &= \min_{\substack{\lambda\in[a,\,b]\\\omega \in \mathbb{R}^{+}}} \Big( \sigma_1\big(T(\jmath\omega;\lambda)\big)\Big)^{-1} \\
	& = \Big(\max_{\substack{\lambda\in[a,\,b]\\\omega \in \mathbb{R}^{+}}} \sigma_1\big(T(\jmath\omega;\lambda)\big)\Big)^{-1} 
	= \left(\|T(\cdot;\cdot)\|_{\mathcal{H}_{\infty}}^{[a,\,b]}\right)^{-1},
	\end{aligned}
	\]
	which concludes the proof.
\end{proof}
\begin{remark}
	The presented relation can be generalized to systems with uncertainties on the delays, multiple uncertainties and other uncertainty structures, such as full block and diagonal uncertainties. Also systems with delays and uncertainties in the input, output and direct feed-through terms can be considered. For more information, see \cite{pa:Appeltans2019}.
\end{remark}

\subsection{Numerical algorithm}
\label{pa_subsec:numerical_algorithm}
 This subsection presents a numerical algorithm to compute the robust stability radius. Once this quantity is found, the robust $\mathcal{H}_{\infty}$-norm associated with \eqref{pa_eq:uncertain_system} follows immediately from \Cref{pa_th:rob_hinf_dist_ins}.

By \eqref{pa_eq:dist_ins}, the robust stability radius is the zero-crossing of the function $\mathbb{R}^{+}\ni\varepsilon\mapsto\alpha_{\varepsilon}^{[a,\,b]}$.  This zero-crossing can be found using the Newton-Bisection method, see \cite[Chapter~9.4]{pa:PressWilliamH1996NriF} for a reference implementation. This root finding method requires the evaluation of both $\alpha_{\varepsilon}^{[a,\,b]}$ and its derivative with respect to $\varepsilon$ for given $\varepsilon$ (whenever this derivative exists). The quantity $\alpha_{\varepsilon}^{[a,\,b]}$ can be computed using the method presented in \cite{pa:Appeltans2019}, which notes that $\alpha_{\varepsilon}^{[a,\,b]}$ is the solution of the following optimization problem:
 \begin{equation}
 \label{pa_eq:manifold_optimisation} 
 \begin{aligned}
 &\underset{s,\,\lambda,\,\Delta}{\text{max}}
 &&\Re\left(s\right),\\
 &\text{subject to}
 &&\det\big(M(s;\lambda,\Delta)\big) = 0,\\
 &&& \lambda \in [a,\,b], \\
 &&& \Delta\in \mathcal{B}_{\|\cdot\|_2\leq\varepsilon}^{\mathbb{C}^{m\times p}}.
 \end{aligned}
 \end{equation}
 Furthermore, the following proposition shows that there exists a $\Delta$ of rank one and norm $\varepsilon$ associated with local optima of this optimization problem. This result will allow us to reduce the search space for $\Delta$ to the space of matrices of rank one and spectral norm $\varepsilon$. We will denote this space as $\mathcal{B}_{\|\cdot\|_2=\varepsilon,\, \rank = 1}^{\mathbb{C}^{m\times p}}$.
 \begin{lemma}
 	Let $s^{\star} \not \in \Lambda^{[a,\,b]}_{0}$ be a local right-most point of $\Lambda_{\varepsilon}^{[a,\,b]}$, then there exist $\lambda^{\star} \in [a,\,b]$ and $\Delta^{\star}\! =\! \varepsilon u v^{H}$ with $u\!\in\!\mathbb{C}^{m}$, $v\!\in\!\mathbb{C}^{p}$ and $\|u\|_2\!=\!\|v\|_2\! =\! 1$ such that $\det\left(M(s^{\star};\lambda^{\star},\Delta^{\star})\right)\! =\! 0$.
 \end{lemma}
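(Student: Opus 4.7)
My plan is to reduce the statement to a singular-value certificate and then read off the rank-one perturbation from an SVD. Since $s^\star \notin \Lambda_0^{[a,b]}$, the matrix $I_n s^\star - \sum_{r=0}^{R} (H_r + \lambda G_r)e^{-s^\star \tau_r}$ is invertible for every $\lambda \in [a,b]$, so the transfer function
\[
T(s^\star;\lambda) := C_z \Bigl(I_n s^\star - \sum_{r=0}^{R} (H_r + \lambda G_r)e^{-s^\star \tau_r}\Bigr)^{-1} B_w
\]
is well-defined and jointly continuous in $(s,\lambda)$ on a neighborhood of $\{s^\star\}\times[a,b]$. Applying the Weinstein-Aronszajn identity exactly as in the proof of \Cref{pa_th:rob_hinf_dist_ins}, together with the formula $\min_{\Delta}\{\|\Delta\|_2 : \det(I - M\Delta) = 0\} = \sigma_1(M)^{-1}$, I obtain, for all $s$ in that neighborhood,
\[
s \in \Lambda_{\varepsilon}^{[a,b]} \quad\Longleftrightarrow\quad f(s) := \max_{\lambda \in [a,b]} \sigma_1\bigl(T(s;\lambda)\bigr) \geq \varepsilon^{-1}.
\]

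The next step is to show that $f(s^\star) = \varepsilon^{-1}$ with equality. From $s^\star \in \Lambda_{\varepsilon}^{[a,b]}$ I immediately have $f(s^\star) \geq \varepsilon^{-1}$. If the inequality were strict, joint continuity of $(s,\lambda)\mapsto\sigma_1(T(s;\lambda))$ together with compactness of $[a,b]$ would make $f$ continuous at $s^\star$, producing an open neighborhood of $s^\star$ contained in $\Lambda_{\varepsilon}^{[a,b]}$; this contradicts the fact that $s^\star$ is a local right-most point, since one could then shift rightward a small amount and remain in the set. Hence $f(s^\star) = \varepsilon^{-1}$, and by compactness the maximum is attained at some $\lambda^\star \in [a,b]$, giving $\sigma_1\bigl(T(s^\star;\lambda^\star)\bigr) = \varepsilon^{-1}$.

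Finally, I construct $\Delta^\star$ from an SVD of $T(s^\star;\lambda^\star)$. Let $u \in \mathbb{C}^m$ and $v \in \mathbb{C}^p$ be unit right and left singular vectors associated with the largest singular value $\sigma_1 = \varepsilon^{-1}$, so that $T(s^\star;\lambda^\star)\, u = \varepsilon^{-1} v$. Setting $\Delta^\star := \varepsilon\, u v^H$ yields a matrix of rank one and spectral norm $\varepsilon$, and
\[
\bigl(T(s^\star;\lambda^\star) \Delta^\star\bigr) v = \varepsilon\, T(s^\star;\lambda^\star)\, u\, (v^H v) = \varepsilon\cdot\varepsilon^{-1} v = v,
\]
so $v$ is an eigenvector of $T(s^\star;\lambda^\star)\Delta^\star$ with eigenvalue one. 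Therefore $\det\bigl(I - T(s^\star;\lambda^\star)\Delta^\star\bigr) = 0$, and reversing the Weinstein-Aronszajn step yields $\det\bigl(M(s^\star;\lambda^\star,\Delta^\star)\bigr) = 0$, which is the claim.

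The delicate point is the intermediate equality $f(s^\star) = \varepsilon^{-1}$: the local right-most hypothesis has to be combined with continuity of $f$, and that continuity depends crucially on $s^\star \notin \Lambda_0^{[a,b]}$ to keep $T(s;\lambda)$ bounded on a neighborhood. Without that hypothesis the Weinstein-Aronszajn reduction fails at points where the unperturbed characteristic matrix is singular, and a separate argument analyzing the resolvent near defective eigenvalues would be required; everything else in the proof is a routine SVD-plus-continuity argument.
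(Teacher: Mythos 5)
Your proof is correct and follows essentially the same route as the paper's: characterize $\Lambda_{\varepsilon}^{[a,b]}\setminus\Lambda_0^{[a,b]}$ via $\max_{\lambda}\sigma_1(T(s;\lambda))\geq\varepsilon^{-1}$, use the local right-most property to force equality $\sigma_1(T(s^\star;\lambda^\star))=\varepsilon^{-1}$ at some $\lambda^\star$, and build the rank-one $\Delta^\star$ from the associated singular vectors before reversing the Weinstein--Aronszajn step. Your version merely spells out the continuity/boundary argument that the paper leaves implicit.
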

 \begin{proof}
 	Firstly, using similar ideas as in the proof of \Cref{pa_th:rob_hinf_dist_ins} one can show that \[\Lambda_{\varepsilon}^{[a,\,b]} = \Lambda^{[a,\,b]}_{0}\cup\Big\{s\in \mathbb{C} \setminus \Lambda^{[a,\,b]}_{0}: \max_{\lambda \in [a,\,b]} \sigma_{1}\left(T(s;\lambda)\right) \geq \varepsilon^{-1} \Big\}\text{.}\]
 	Because $s^{\star}\not \in \Lambda^{[a,\,b]}_{0}$ and $s^{\star}$ is a right-most point of of $\Lambda_{\varepsilon}^{[a,\,b]}$, $s^{\star}$ must lie on the boundary of $\big\{s\in \mathbb{C} \setminus \Lambda^{[a,\,b]}_{0}: \max_{\lambda \in [a,\,b]} \sigma_{1}\left(T(s;\lambda)\right) \geq \varepsilon^{-1} \big\}$. Hence, it holds that there exists a $\lambda^{\star} \in [a,\,b]$ such that $\sigma_{1}\left(T(s^{\star};\lambda^{\star})\right) = \varepsilon^{-1}$.\\
 	Secondly, it can easily be verified that $\det\left(I-T(s^{\star};\lambda^{\star})(\varepsilon\nu\upsilon^{H})\right) = 0$ for $\upsilon$ and $\nu$ the left and right normalized singular vectors associated with $\sigma_1\big(T(s^{\star};\lambda^{\star})\big)$. Following the derivation in the first part of the proof of \Cref{pa_th:rob_hinf_dist_ins} in the opposite direction one finds $\det\left(M\left(s^{\star};\lambda^{\star},\varepsilon\nu\upsilon^{H}\right)\right) = 0$.
 \end{proof}
 
 Constrained optimization problem \eqref{pa_eq:manifold_optimisation} is solved using a projected gradient flow method. The idea of this approach is to define a flow in the space of permissible variables along which the objective function monotonically increases and whose attractive stationary points are (local) optimizers of the optimization problem. These stationary points can be found by choosing initial parameters and discretizing the resulting path till convergence to a stationary point using Euler's forward method. The step size is chosen such that the objective function monotonically increases along the discretized path. For more details on the usage of these methods for the computation of extreme points of spectral value sets we refer to \cite{pa:Borgioli2019},\cite{pa:Guglielmi2011} and \cite{pa:Guglielmi2013}.

In our case we are thus looking for a path $[0,+\infty) \ni \theta \mapsto (\lambda(\theta),\Delta(\theta)) \in [a,\,b] \times \mathcal{B}_{\|\cdot\|_2=\varepsilon,\,\rank = 1}^{\mathbb{C}^{m\times p}}$ such that the function 
\[
\theta \mapsto \max\{\Re(s) :  \det\left(M\big(s;\lambda(\theta),\Delta(\theta)\big)\right)=0\}
\]
is monotonically increasing and such that the (local) optimizers of \eqref{pa_eq:manifold_optimisation} appear as attractive stationary points. Furthermore, to improve computational performance we employ an explicit decomposition of $\Delta(\theta)$ as \linebreak$\varepsilon u(\theta) v(\theta)^H$ inspired by \cite{pa:Guglielmi2011}. Here we consider the following flow where $\dot{u}$, $\dot{v}$ and $\dot{\lambda}$ denote the derivatives of $u$, $v$ and $\lambda$ with respect to $\theta$ and where the dependency of $u(\theta)$, $v(\theta)$ and $\lambda(\theta)$ on $\theta$ is omitted for notational convenience.

\begin{equation}
\label{pa_eq:resulting_path}
\left\{
\setlength{\arraycolsep}{2pt}
	\begin{array}{rl}
	\dot{u} =& \frac{\varepsilon}{\xi(\theta)}\Big(\!\left(I\minus uu^{H}\right) B_{w}^{T}\varphi(\theta)\psi(\theta)^{H}C_{z}^{T}v +  \frac{\jmath}{2}\Im\left(u^{H}B_{w}^{T}\varphi(\theta) \psi(\theta)^{H}C_{z}^{T}v\right)u\Big) \\ 
	\dot{v} =& \frac{\varepsilon}{\xi(\theta)}\Big(\!\left(I\minus vv^{H}\right)C_{z}^{\hphantom{T}}\psi(\theta)\varphi(\theta)^{H}B_{w}^{\hphantom{T}}u +\frac{\jmath}{2}\Im\left(v^{H}C_{z}^{\hphantom{T}}\psi(\theta) \varphi(\theta)^{H}B_{w}^{\hphantom{T}}u\right)v\hspace{0.01cm}\Big)  \\[10pt]
	\dot{\lambda} =& \left\{\begin{array}{l}
	\begin{array}{l}
	0  \hspace{0.6cm} \lambda = b \text{~and~} \left( \sum_{r=0}^{R} \Re\left(\varphi(\theta)^{H}G_r \psi(\theta) e^{-s(\theta)\tau_k}\right)\right)>0\\
	0  \hspace{0.6cm} \lambda = a \text{~and~} \left( \sum_{r=0}^{R} \Re\left(\varphi(\theta)^{H}G_r \psi(\theta) e^{-s(\theta)\tau_k}\right)\right)<0
	\end{array} \\
	\frac{1}{\xi(\theta)}\sum\limits_{r=0}^{R} \Re\left(\varphi(\theta)^{H}G_r \psi(\theta) e^{-s(\theta)\tau_k}\right) \hspace{2cm} \text{otherwise}
	\end{array} 
	\right.
	\end{array}
	\right.
\end{equation}
with $s(\theta)$ the right-most characteristic root of $M\big(s;\lambda(\theta),\varepsilon u(\theta) v(\theta)^{H}\big)$, and $\varphi(\theta)$ and $\psi(\theta)$ the associated left and right eigenvectors, normalized such that
 \[
\xi(\theta) = \varphi(\theta)^{H}\Big(I+\textstyle\sum_{r=0}^{R}\left(H_r + \lambda(\theta) G_{r}\right)\tau_r e^{-s(\theta) \tau_r}\Big)\psi(\theta)  > 0.
\] 

These paths are a combination of the paths presented in \cite{pa:Borgioli2019} (computing the pseudo-spectral abscissa for real-valued Frobenius norm bounded uncertainties) and \cite{pa:Guglielmi2011} (computing the pseudo-spectral abscissa for complex-valued spectral norm bounded uncertainties). The fulfillment of the constraints, the monotonous increase of the objective function and the (local) optimality of stationary points follows from these works. To conclude, we give some intuition behind \eqref{pa_eq:resulting_path}: the right-hand side can be interpreted as a projection of the gradient of the objective function on the tangent space of the feasible set. This projection step is needed to ensure that the variables remain within the feasible set.

The algorithm for numerically solving \eqref{pa_eq:manifold_optimisation} is summarized in \Cref{pa_alog:geometric_optimization}, where $s_{R}\big(M(s;\lambda,\varepsilon u v^{H})\big)$ gives the right-most characteristic root of \linebreak $M(s;\lambda,\varepsilon u v^{H})$ and $\dot{u}_k$, $\dot{v}_k$ and $\dot{\lambda}_k$ correspond to the right-hand side of \eqref{pa_eq:resulting_path} evaluated at $u_k$, $v_k$ and $\lambda_k$.
\begin{algorithm}
	\caption{Discretization algorithm to solve \eqref{pa_eq:manifold_optimisation}.}
	\label{pa_alog:geometric_optimization}
	\begin{algorithmic}
		\State $k\gets 0$ and choose initial $\lambda_0$, $u_0$, $v_0$
		\State $s_{-1}\gets -\infty$ and $s_0 \gets s_{R}\left(M\big(s;\lambda_0,\varepsilon u_0 v_0^{H}\big)\right)$
		\While {$|s_{k}-s_{k-1}| > \text{tol} \cdot \frac{|s_{k}+s_{k-1}|}{2}$}
		\State 
			Find $h$ such that $\Re\Big(s_{R}\left(M\big(s;\lambda_k\plus h\dot{\lambda}_k,\varepsilon (u_k \plus h\dot{u}_k) (v_k\plus h\dot{v}_k)^{H}\big)\right)\Big)\geq$ \par 
			\hskip\algorithmicindent $\Re\Big(s_{R}\big(M(s;\lambda_k,\varepsilon u_k v_k^{H})\big)\Big)$
		\normalsize
		\If {No $h>tol_h$ is found} stop.
		\Else
		\State $\lambda_{k+1} \gets \lambda_k+h\dot{\lambda}_k$; $\lambda_{k+1} \gets \max\{a,\min\{\lambda_{k+1},b\}\}$;
		\State $\rlap{$u_{k+1}$}\phantom{\lambda_{k+1}} \gets \phantom{\lambda_k}\llap{$u_k$} + h \dot{u}_k$; $\rlap{$u_{k+1}$}\phantom{\lambda_{k+1}} \gets \frac{u_{k+1}}{\|u_{k+1}\|_2}$; 
		\State $\rlap{$v_{k+1}$} \phantom{\lambda_{k+1}}  \gets \phantom{\lambda_k}\llap{$v_k$}+h\dot{v}_k$; $\rlap{$v_{k+1}$}\phantom{\lambda_{k+1}} \gets \frac{v_{k+1}}{\|v_{k+1}\|_2}$; 
		\State $\rlap{$s_{k+1}$}\phantom{\lambda_{k+1}} \gets s_{R}\left(M\big(s;\lambda_{k+1},\varepsilon u_{k+1} v_{k+1}^{H}\big)\right)$;
		\State $k\gets k+1$;
		\EndIf
		\EndWhile
	\end{algorithmic}
\end{algorithm}

\begin{remark}
	To compute the right-most characteristic root, we use the algorithm presented in \cite{pa:Wu2012}. This algorithm exploits the relation between a non-linear delay eigenvalue problem and the linear eigenvalue problem corresponding to the solution operator of the associated delay differential equation. More precisely, this method uses a spectral discretization of the solution operator to approximate the characteristic roots. These roots are subsequently refined by applying Newton corrections based on the original non-linear eigenvalue problem formulation. This methods is however restricted to small problems. For large sparse matrices one could use iterative methods such as \cite{pa:Jarlebring2010} and \cite{pa:Guttel2014} to compute the right-most characteristic roots.
\end{remark}

Once $\alpha_{\varepsilon}^{[a,\,b]}$ is computed, its derivative with respect to $\varepsilon$ can be computed almost everywhere as shown in the following proposition.
\begin{proposition}
	Let $\lambda^{\star}$ and $\Delta^{\star} = \varepsilon u^{\star} {v^{\star}}^{H}$ be the unique optimizers of \eqref{pa_eq:manifold_optimisation} and assume that the right-most characteristic root of $M(s;\lambda^{\star}, \Delta^{\star})$ is simple, then
	\[
	\dfrac{d\alpha_{\varepsilon}^{[a,\,b]}}{d\varepsilon} = \dfrac{\Re\left({\varphi^{\star}}^{H}B_{w} u^{\star} {v^{\star}}^{H} C_{z}\psi^{\star}\right)}{{\varphi^{\star}}^{H}(I+\sum_{r=0}^{R}\left(A_k + \lambda^{\star}G_{r}\right)\tau_{r}e^{-\tau_r s^{\star}})\psi^{\star}}\text{,}
	\] 
	with $s^{\star}$ the right-most characteristic root of $M(s;\lambda^{\star}, \Delta^{\star})$ and $\phi^{\star}$ and $\psi^{\star}$ its corresponding left and right eigenvectors, normalized such that the denominator is real and positive.
\end{proposition}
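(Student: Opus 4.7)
The approach I would take is to combine the envelope theorem (to handle the fact that the internal optimizers $\lambda^{\star},u^{\star},v^{\star}$ also depend on $\varepsilon$) with first-order eigenvalue perturbation theory applied to the constraint $\det(M(s;\lambda,\Delta))=0$. The key step is to reparameterize $\Delta=\varepsilon\tilde{\Delta}$ with $\tilde{\Delta}\in\mathcal{B}_{\|\cdot\|_{2}\le 1}^{\mathbb{C}^{m\times p}}$, so that $\varepsilon$ enters \eqref{pa_eq:manifold_optimisation} only through the explicit scaling of $\tilde{\Delta}$ inside $M$ and not through the feasible set. With this reformulation and the uniqueness assumption, standard sensitivity results for parameterized optimization give that the optimizers depend continuously (and, generically, differentiably) on $\varepsilon$, so $\alpha_{\varepsilon}^{[a,b]}=\Re(s^{\star}(\varepsilon))$ is differentiable at $\varepsilon$.

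Next I would invoke the envelope principle: because $(\lambda^{\star},\tilde{\Delta}^{\star})=(\lambda^{\star},u^{\star}{v^{\star}}^{H})$ is optimal, the first-order variation of $\Re(s^{\star})$ with respect to perturbations of $\lambda,u,v$ that stay inside $[a,b]\times\mathcal{B}_{\|\cdot\|_{2}=1,\,\rank=1}^{\mathbb{C}^{m\times p}}$ vanishes (with the box constraint for $\lambda$ handled via the KKT condition already used in the three-case formula for $\dot{\lambda}$ in \eqref{pa_eq:resulting_path}). Therefore $d\alpha_{\varepsilon}^{[a,b]}/d\varepsilon$ equals the partial derivative of $\Re(s^{\star})$ taken with $\lambda^{\star},u^{\star},v^{\star}$ held fixed while only the explicit $\varepsilon$ in $\varepsilon\tilde{\Delta}^{\star}=\varepsilon u^{\star}{v^{\star}}^{H}$ varies.

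That partial derivative is then computed by classical perturbation theory for the simple eigenvalue $s^{\star}$ of the nonlinear eigenvalue problem $M(s;\lambda^{\star},\varepsilon u^{\star}{v^{\star}}^{H})\psi=0$. Differentiating the identity $M(s^{\star};\lambda^{\star},\varepsilon u^{\star}{v^{\star}}^{H})\psi^{\star}=0$ with respect to $\varepsilon$, left-multiplying by ${\varphi^{\star}}^{H}$, and using ${\varphi^{\star}}^{H}M(s^{\star};\ldots)=0$ gives the standard formula
\[
\frac{ds^{\star}}{d\varepsilon}= -\,\frac{{\varphi^{\star}}^{H}\bigl(\partial M/\partial\varepsilon\bigr)\psi^{\star}}{{\varphi^{\star}}^{H}\bigl(\partial M/\partial s\bigr)\psi^{\star}}
=\frac{{\varphi^{\star}}^{H}B_{w}u^{\star}{v^{\star}}^{H}C_{z}\psi^{\star}}{{\varphi^{\star}}^{H}\bigl(I+\sum_{r=0}^{R}(H_{r}+\lambda^{\star}G_{r})\tau_{r}e^{-\tau_{r}s^{\star}}\bigr)\psi^{\star}},
\]
since the only $\varepsilon$-dependence in $M$ is $-B_{w}(\varepsilon u^{\star}{v^{\star}}^{H})C_{z}$. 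Taking the real part and observing that the normalization chosen above \eqref{pa_eq:resulting_path} makes the denominator a positive real number, I recover $d\alpha_{\varepsilon}^{[a,b]}/d\varepsilon=\Re(ds^{\star}/d\varepsilon)$ in the claimed form.

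The main obstacle is the first step: rigorously justifying the envelope/differentiability argument for the constrained, non-smooth problem \eqref{pa_eq:manifold_optimisation}. The box constraint on $\lambda$, the unit-sphere constraints on $u$ and $v$, and the possible active rank constraint all require care in stating the first-order optimality conditions and in showing that $\varepsilon\mapsto(\lambda^{\star},u^{\star},v^{\star})$ is differentiable (or at least one-sided differentiable) so that chain-rule terms along the manifold directions genuinely vanish. Once this smooth dependence is granted, the eigenvalue perturbation computation is routine.
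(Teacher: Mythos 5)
Your proof is correct and is essentially the argument the paper intends: the paper's own ``proof'' is a one-line citation to \cite[Theorem~2]{pa:Borgioli2019}, which rests on exactly this combination of an envelope/Danskin argument with first-order perturbation of a simple eigenvalue of the nonlinear eigenvalue problem $M(s;\lambda^{\star},\varepsilon u^{\star}{v^{\star}}^{H})\psi=0$ (and your denominator correctly carries $H_{r}$ where the statement in the paper has the typo $A_{k}$). The only refinement worth noting is that the ``main obstacle'' you flag largely dissolves if you apply Danskin's theorem to the reparameterized problem: with the feasible set $[a,\,b]\times\mathcal{B}_{\|\cdot\|_{2}\leq 1}^{\mathbb{C}^{m\times p}}$ fixed, the maximizer unique, and $\Re\big(s_{R}(\lambda,\varepsilon\tilde{\Delta})\big)$ smooth in $\varepsilon$ near that maximizer by simplicity of the root, the derivative of the value function equals the partial derivative in $\varepsilon$ at the maximizer without requiring differentiability of $\varepsilon\mapsto(\lambda^{\star},u^{\star},v^{\star})$.
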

\begin{proof}
	Similar as in \cite[Theorem~2]{pa:Borgioli2019}.
\end{proof}

\section{A scalable $\hinf$-controller synthesis method}
\label{pa_sec:controller_design}
In this section we will describe a controller synthesis method for networked systems of form \eqref{pa:state_space_overall} whose associated adjacency matrix fulfills \Cref{pa_ass:adjacency_matrix_unitary}. The idea behind the presented method is to find a suitable controller by directly minimizing \eqref{pa_eq:rob_hinf_decoupled} in the controller parameters $\mathbf{p}$. Or in other words, we look for controller parameters $\mathbf{p}^{\star}$ that fulfill
 \begin{equation}
 \label{pa_eq:minimisation_problem}
\mathbf{p}^{\star} \in \underset{\mathbf{p}}{\arg\min}\|\hat{T}_{\hat{w}\hat{z}}(\cdot;\mathbf{p},\cdot)\|_{\mathcal{H}_{\infty}}^{[a,\,b]}.\\
\end{equation}
Note however that $\|\hat{T}_{\hat{w}\hat{z}}(\cdot;\mathbf{p},\cdot)\|_{\mathcal{H}_{\infty}}^{[a,\,b]}$ is only an upper bound for the actual \hinfnorm{} of \eqref{pa_eq:overall_tf}. The resulting control parameters will therefore in most cases not minimize the actual \hinfnorm{} of the system, but this methodology has the advantage that its computation cost does not depend on the number of subsystems. Furthermore, if \Cref{pa_ass:adjacency_matrix_unitary} remains valid with $a$ and $b$ independent of the number of subsystems, then the resulting controller guarantees a level of disturbance rejection even if the number of subsystems is unknown.

The minimization of \eqref{pa_eq:minimisation_problem} is however not trivial, as the robust \hinfnorm{} may be a non-smooth and non-convex function of the controller parameters even if the controller matrices are analytic functions of the controller parameters. This precludes the usage of standard optimization methods. Therefore, we will use HANSO \cite{pa:Overton2009}, which implements a combination of BFGS with weak Wolfe line search and gradient sampling. Furthermore, to decrease the chance of ending up at a local optimum we will restart the optimization algorithm from several initial points. The optimization procedure requires the evaluation of both the objective function and its derivative with respect to the control parameters whenever this derivative exists. To evaluate the objective function, we use the method presented in \Cref{pa_sec:computing_rob_hinf}. The derivative with respect to the control parameters follows from the following proposition.

\begin{proposition}
 	If there exists a unique pair \[
	(\omega^{\star},\lambda^{\star}) \in \underset{\lambda \in [a,\,b],~\omega \in \mathbb{R}^{+}}{\argmax}  \sigma_{1}\left(T(\jmath\omega;\mathbf{p},\lambda)\right)
	\] and if $\sigma_{1}\left(T(\jmath\omega^{\star};\mathbf{p},\lambda^{\star})\right)$ is simple,  then	the function $\mathbf{p}\mapsto\|\hat{T}_{\hat{w}\hat{z}}(\cdot;\mathbf{p},\cdot)\|_{\mathcal{H}_{\infty}}^{[a,\,b]}$ is differentiable at $\mathbf{p}$, with
	\[
	\dfrac{d}{d\mathbf{p}}\|T(\cdot;\mathbf{p},\cdot)\|_{\mathcal{H}_{\infty}}^{[a,\,b]} = \Re\left(u^{H}\left(\frac{\partial }{\partial \mathbf{p}}T(\jmath \omega^{\star};\mathbf{p},\lambda^{\star}) \right)v\right)\text{,}
	\]
	in which $u$ and $v$ are the normalized left and right singular vectors associated with $\sigma_{1}\big(T(\jmath\omega^{\star};\mathbf{p},\lambda^{\star})\big)$, respectively.
\end{proposition}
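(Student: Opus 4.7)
The plan is to combine an envelope/Danskin-type theorem for the parametric maximization with the classical first-order perturbation formula for a simple singular value. Write
\[
\|T(\cdot;\mathbf{p},\cdot)\|_{\mathcal{H}_{\infty}}^{[a,\,b]} = \max_{(\omega,\lambda)\in \mathbb{R}^{+}\times[a,\,b]} f(\omega,\lambda;\mathbf{p}), \qquad f(\omega,\lambda;\mathbf{p}) := \sigma_{1}\bigl(T(\jmath\omega;\mathbf{p},\lambda)\bigr).
\]
I first argue the supremum is attained: internal exponential stability uniformly in $\lambda\in[a,\,b]$ together with the strict properness of $T$ in $\omega$ forces $f\to 0$ as $\omega\to+\infty$ uniformly in $\lambda$, so the effective optimization domain is compact. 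By hypothesis the attaining pair $(\omega^{\star},\lambda^{\star})$ is unique.

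Second, I would invoke Danskin's theorem (in the Milgrom--Segal envelope form): if $f$ is jointly continuous in $(\omega,\lambda,\mathbf{p})$, the map $\mathbf{p}\mapsto f(\omega,\lambda;\mathbf{p})$ is continuously differentiable, and the parametric maximizer over a compact set is unique, then the value function is differentiable at $\mathbf{p}$ with
\[
\frac{d}{d\mathbf{p}}\max_{\omega,\lambda} f(\omega,\lambda;\mathbf{p}) \;=\; \left.\frac{\partial f}{\partial \mathbf{p}}\right|_{(\omega^{\star},\lambda^{\star})}.
\]
The continuity and smoothness hypotheses are inherited from the smoothness of $T(\jmath\omega;\mathbf{p},\lambda)$ in $\mathbf{p}$ together with the assumed simplicity of $\sigma_{1}$ at the optimizer, which implies that $\sigma_{1}$ is locally analytic as a function of its matrix argument. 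No interiority of $(\omega^{\star},\lambda^{\star})$ in $\mathbb{R}^{+}\times[a,\,b]$ is needed for the envelope identity to hold.

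Third, the explicit formula follows from the standard perturbation identity for a simple singular value. Starting from $Tv=\sigma_{1}u$, $u^{H}T=\sigma_{1}v^{H}$ and $\|u\|_{2}=\|v\|_{2}=1$, differentiating with respect to $\mathbf{p}$ and exploiting that $u^{H}(du/d\mathbf{p})$ and $v^{H}(dv/d\mathbf{p})$ are purely imaginary (so their real parts drop out after taking $\Re(u^{H}\,\cdot\,)$) yields
\[
\left.\frac{\partial \sigma_{1}}{\partial \mathbf{p}}\right|_{(\omega^{\star},\lambda^{\star})} \;=\; \Re\!\left(u^{H}\, \frac{\partial T}{\partial \mathbf{p}}(\jmath\omega^{\star};\mathbf{p},\lambda^{\star})\, v\right),
\]
which combined with the envelope identity above gives the claimed expression. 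The main obstacle is the rigorous verification of Danskin's regularity hypotheses at the optimizer, in particular certifying the uniform decay in $\omega$ that reduces the outer max to one over a compact set and checking joint continuity near a possibly boundary value $\lambda^{\star}\in\{a,b\}$; once these are in place, the result reduces to a mechanical application of the two ingredients, exactly as in \cite[Theorem~2]{pa:Borgioli2019}.
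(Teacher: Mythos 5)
Your argument is correct and follows exactly the route the paper implicitly relies on: the paper states this proposition without proof (its companion result on $d\alpha_{\varepsilon}^{[a,\,b]}/d\varepsilon$ is justified only by reference to \cite[Theorem~2]{pa:Borgioli2019}), and that reference rests on the same two ingredients you use, namely a Danskin/envelope argument for the unique maximizer over the effectively compact $(\omega,\lambda)$ domain combined with the first-order perturbation formula for a simple singular value. Your additional care about the uniform decay in $\omega$ (which holds here because $T$ is strictly proper) and the irrelevance of $\lambda^{\star}$ possibly lying on the boundary of $[a,\,b]$ is sound, so there is no gap to flag.
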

Finally, to start the optimization process we need initial controller parameters $\mathbf{p}$ for which the system is internally exponentially stable for all $\lambda \in [a,\,b]$. To find such a starting point, we use the method presented in \cite{pa:Dileep2018}.

\section{Example}
\label{pa_sec:example}
	In this example we consider a networked system that consists of $N$ frictionless carts that are interconnected using identical springs and that each balance an inverted pendulum. Furthermore, the first and the last cart are connected to the wall using additional (but identical) springs. This set-up is illustrated in \Cref{pa_fig:setup} for $N$ equal to 3. The dynamics of an individual cart-pendulum subsystem are governed by the following non-linear delay differential equations
	\[
	\left\{
	\begin{array}{l}
	(M+m)\ddot{x}_{j}(t)  +~ml\cos\big(\theta_j(t)\big)\ddot{\theta}_j(t)-ml\sin\big(\theta_j(t)\big) \big(\dot{\theta_j}(t)\big)^{2} ~+ \\ [2pt]  k\big(x_{j}(t)-x_{j+1}(t)\big)+k\big(x_{j}(t)-x_{j-1}(t)\big) - u_{j}(t-\tau_{u})-w_{j,1}(t) = 0
	\\ [4pt]
	l\ddot{\theta}_j(t)-g\sin\big(\theta_j(t)\big)  + \ddot{x}_j(t) \cos\big(\theta_j(t)\big)-w_{j,2}(t) = 0  
	\end{array}
	\right.
	\]
	for $j=1,\dotsc,N$ and $M$ the mass of the individual carts, $m$ the mass of the pendulum's bob which is connected to the cart using a massless rod of length $l$, $k$ the spring constant, $u_j$ a controllable force that acts on the cart with an input delay $\tau_u$, $w_{j,1}$ and $w_{j,2}$ external disturbances, $\theta_{j}$ the angular displacement of the pendulum of cart $j$, $x_{j}$ the horizontal displacement of the $j$\textsuperscript{th} cart's center with respect to its equilibrium position and $x_{0}=x_{N+1} = 0$.
	By a linearization around the equilibrium point $(x_{j},\dot{x}_j,\theta_j,\dot{\theta}_j) = (0,0,0,0)$ and choosing $\begin{bmatrix}
		x_{j}(t) & \theta_{j}(t)
	\end{bmatrix}^{T}$ as both measured and performance output, we obtain the following linear state-space model of form \eqref{pa_eq:subsysem}:
	\begin{equation}
	\label{pa_eq:subsystem}
	\setlength{\arraycolsep}{2pt}
	\begin{array}{rcl}
	\dot{x}_{j}(t)&
	=&
	\begin{bmatrix}
	0 & 1 & 0 & 0 \\
	\negative\frac{2k}{M}& 0 & \negative\frac{mg}{M} & 0\\
	0 & 0 & 0 & 1 \\
	\frac{2k}{Ml} & 0 & \frac{g}{l}\plus\frac{mg}{Ml} & 0
	\end{bmatrix}\! x_{j}(t)
	+ 
	\begin{bmatrix}
	0 \\
	\frac{1}{M} \\
	0 \\
	\frac{\negative1}{Ml}
	\end{bmatrix}\!
	u_{j}(t \minus \tau_{u})
	+
	\begin{bmatrix}
	0 \\
	\frac{k}{M} \\
	0 \\
	\frac{\negative k}{Ml}
	\end{bmatrix}\!
	u^{n}_{j}(t)
	+ \\[25pt]
	&&
	\begin{bmatrix}
	0 & 0 \\
	\frac{1}{M}  & \frac{\negative m}{M}\\
	0 & 0 \\
	\frac{\negative 1}{Ml}  & \frac{1}{l}\plus\frac{m}{Ml}
	\end{bmatrix}\!
	w_{j}(t)
	\end{array}
	\end{equation}
	\[
	\setlength{\arraycolsep}{2pt}
	y_{j}(t) = \begin{bmatrix}
	1 & 0 & 0 & 0 \\
	0 & 0 & 1 & 0
	\end{bmatrix} x_{j}(t), ~~ y^{n}_{j}(t) = \begin{bmatrix}
	2 & 0 & 0 & 0
	\end{bmatrix}x_{j}(t),~~
	z_{j}(t) = \begin{bmatrix}
	1 & 0 & 0 & 0 \\
	0 & 0 & 1 & 0
	\end{bmatrix} x_{j}(t)
	\]
	with,
	\[
	u^{n}_{j}(t) = \sum_{i=1}^{N} P_{j,i}^{N}~y_{i}^{n}(t)\text{,}
	\]
	and
	\[
	P_N = 
	P_N^{\,\text{line}} = \begin{bmatrix}
	0 & 0.5 &  &    & \\
	0.5 & 0 & 0.5 &   & \\
	 & \ddots & \ddots & \ddots &  \\
	 &  &  0.5 & 0 & 0.5 \\
	 &  &  & 0.5 & 0
	\end{bmatrix}.
	\]
	The input signal $u_j$ is generated using a controller of form \eqref{pa_eq:controller} with $n_c$ equal to 2. Furthermore, we will allow communication between the controllers of neighboring subsystems and as control parameters $\mathbf{p}$ we choose the elements of the control matrices.

		\hfill\allowbreak
	\begin{figure}[!htbp]
		\centering
		\begin{tikzpicture}[scale=0.8]
		\draw (0,3) -- (0,0) -- (12,0) -- (12,3);
		\fill[LineSpace=9pt, pattern=my north west lines,pattern color=gray] (0,3) -- (0,0) -- (12,0) -- (12,3) -- (12.5,3) -- (12.5,-0.5) -- (-0.5,-0.5) -- (-0.5,3) -- cycle;
		
		\draw[decoration={aspect=0.4, segment length=2mm, amplitude=2mm,coil},decorate] (0.15,0.9) -- (1.5,0.9); 
		\draw (0,0.9) -- (0.15,0.9);
		\node (k) at (0.75,0.45) {$k$};

		\draw (2,0.3) circle (0.3);
		\draw (3,0.3) circle (0.3);
		\draw (1.5,0.6) rectangle (3.5,1.2);
		\node (M) at (2.5,0.9) {$M$};
		\filldraw (3.25,3) circle (0.15);
		\draw (2.5,1.2) -- (3.25,3);
		\node (m) at (3.35,3.35) {$m$};
		\node [rotate=67.38] (l) at (3.15,2.1) {$l$};
		\draw[dashed] (2.5,1.2) -- (2.5,2.5);
		\draw [domain=67.38:90] plot ({2.5+0.7*cos(\x)}, {1.2+0.7*sin(\x)});
		\node (theta) at (2.72,2.3) {$\theta_1$};
		\draw (2.5,-0.7) -- (2.5,-1.1);
		\draw [-{>[scale=2]}] (2.5,-0.9) -- (3.5,-0.9);
		\node (x) at (3,-1.3) {$x_1$};
		\draw (3.5,1.2) -- (3.5,1.6);
		\draw[->] (3.5,1.4) -- (4.3,1.4);
		\node (F) at (3.9,1.7) {$u_1$};
		
		\draw[decoration={aspect=0.4, segment length=2mm, amplitude=2mm,coil},decorate] (3.65,0.9) -- (5,0.9); 
		\draw (3.5,0.9) -- (3.65,0.9);
		\node (k) at (4.25,0.45) {$k$};
		
		\draw (5.5,0.3) circle (0.3);
		\draw (6.5,0.3) circle (0.3);
		\draw (5,0.6) rectangle (7,1.2);
		\node (M) at (6,0.9) {$M$};
		\filldraw (6.75,3) circle (0.15);
		\draw (6,1.2) -- (6.75,3);
		\node (m) at (6.85,3.35) {$m$};
		\node [rotate=67.38] (l) at (6.65,2.1) {$l$};
		\draw[dashed] (6,1.2) -- (6,2.5);
		\draw [domain=67.38:90] plot ({6+0.7*cos(\x)}, {1.2+0.7*sin(\x)});
		\node (theta) at (6.22,2.3) {$\theta_2$};
		\draw (6,-0.7) -- (6,-1.1);
		\draw [-{>[scale=2]}] (6,-0.9) -- (7,-0.9);
		\node (x) at (6.5,-1.3) {$x_2$};
		\draw (7,1.2) -- (7,1.6);
		\draw[->] (7,1.4) -- (7.8,1.4);
		\node (F) at (7.4,1.7) {$u_2$};
		
		\draw[decoration={aspect=0.4, segment length=2mm, amplitude=2mm,coil},decorate] (7.15,0.9) -- (8.5,0.9); 
		\draw (7,0.9) -- (7.15,0.9);
		\node (k) at (7.75,0.45) {$k$};
		
		\draw (9,0.3) circle (0.3);
		\draw (10,0.3) circle (0.3);
		\draw (8.5,0.6) rectangle (10.5,1.2);
		\node (M) at (9.5,0.9) {$M$};
		\filldraw (10.25,3) circle (0.15);
		\draw (9.5,1.2) -- (10.25,3);
		\node (m) at (10.35,3.35) {$m$};
		\node [rotate=67.38] (l) at (10.15,2.1) {$l$};
		\draw[dashed] (9.5,1.2) -- (9.5,2.5);
		\draw [domain=67.38:90] plot ({9.5+0.7*cos(\x)}, {1.2+0.7*sin(\x)});
		\node (theta) at (9.72,2.3) {$\theta_3$};
		\draw (9.5,-0.7) -- (9.5,-1.1);
		\draw [-{>[scale=2]}] (9.5,-0.9) -- (10.5,-0.9);
		\node (x) at (10,-1.3) {$x_3$};
		\draw (10.5,1.2) -- (10.5,1.6);
		\draw[->] (10.5,1.4) -- (11.3,1.4);
		\node (F) at (10.9,1.7) {$u_3$};
		
		\draw[decoration={aspect=0.4, segment length=2mm, amplitude=2mm,coil},decorate] (10.65,0.9) -- (12,0.9); 
		\draw (10.5,0.9) -- (10.65,0.9);
		\node (k) at (11.25,0.45) {$k$};
		\end{tikzpicture}	
		\caption{Schematic representation of the considered set-up for $N= 3$.}
		\label{pa_fig:setup}
	\end{figure}
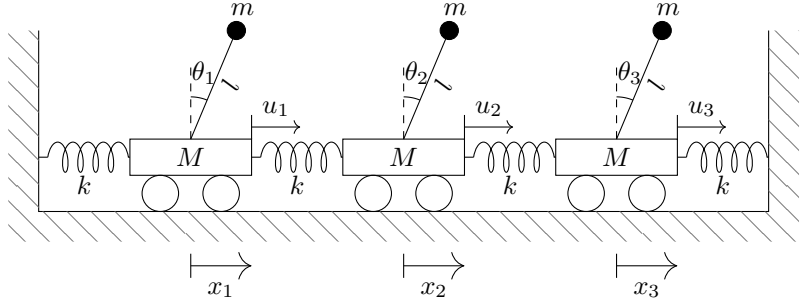
	\vspace{1em}
	
	As mentioned in \Cref{pa_ex:adjacency}, the eigenvalues of $P_N^{\,\text{line}}$ are restricted to the interval $[-1,\,1]$ for all $N>1$. It thus follows from \Cref{pa_cor:upperbound_hinfnorm} that the robust \hinfnorm{} of a single subsystem forms an upper bound for the \hinfnorm{} of the overall network that holds independent of $N$. Furthermore, this upper bound can be computed efficiently using the method presented in Section~\ref{pa_sec:computing_rob_hinf}. 
	
	For the following parameter values: $M = 1$ \si{kg}, $m = 0.05$ \si{kg}, $k=1$ \si{N/m}, $l=1$ \si{m}, $g=9.8$ \si{m/s^2}, $\tau_{u} = 0.1$ \si{s} and $\tau_{u_{nc}} = 0.2$ \si{s},  we apply the procedure of \Cref{pa_sec:controller_design} to find the controller parameters that minimize this upper bound. The obtained controller matrices (rounded to four digits accuracy) are:
	\begin{equation}
	\label{pa_eq:resulting_controller}
	\setlength{\arraycolsep}{2pt}
	\renewcommand{\arraystretch}{1.2}
	\begin{array}{lclclcl}
	J_{\mathbf{p}^{\star}} &=& \begin{bmatrix}
	\negative 67.92 & \negative 313.2\\
	\negative 52.15 & \negative 306.8
	\end{bmatrix} & & F_{\mathbf{p}^{\star}} &=& \begin{bmatrix}
	\negative 407.4 & 1139 \\
	\phantom{\negative}141.8 & 1519
	\end{bmatrix} \\ [10pt]
	 F_{\mathbf{p}^{\star}}^{n} &=& \begin{bmatrix}
	\negative 33.74 & \negative 112.6 \\
	\negative 91.45 & \negative 147.9
	\end{bmatrix} & &
	L_{\mathbf{p}^{\star}} &=& \begin{bmatrix}
	\negative 143.3 & \negative 834.4
	\end{bmatrix} \\ [10pt]
	K_{\mathbf{p}^{\star}} & = & \begin{bmatrix}
	\phantom{\negative} 349.9 & 4172
	\end{bmatrix} & &  K_{\mathbf{p}^{\star}}^{n} & =& \begin{bmatrix}
	\negative 246.5 & \negative 406.9
	\end{bmatrix},
	\end{array}
	\end{equation}
	in which the subscript $\mathbf{p}^{\star}$ is used to indicate that these matrices correspond to the minimizing control parameters.
	 The corresponding upper bound for the \hinfnorm{} equals 0.512249 (rounded to 6 digits). \Cref{table:hinfnorm_N} gives the actual \hinfnorm{} of the closed loop system  with controller matrices \eqref{pa_eq:resulting_controller} for several $N$. We observe that these values lie close to the computed upper bound. This table also gives the time required by the algorithm described in \cite{pa:Gumussoy2011} to compute these values. As expected, the computation time grows roughly cubically with respect to $N$. For comparison, the computation time of the robust \hinfnorm{} of the associated uncertain subsystem equals 75.70 \si{s}. For system with large number of subsystems it is thus beneficiary to minimize this upper bound instead of the actual \hinfnorm{}.
	
	\begin{table}[!htbp]
		\centering
		\caption{The \hinfnorm{} (rounded to 6 digits accuracy) of the closed loop networked system for several $N$ and the computation time required by the algorithm described in \cite{pa:Gumussoy2011} to compute these values.}
		\label{table:hinfnorm_N}
		\renewcommand{\arraystretch}{1.3}
		\begin{tabular}{c|c|c|c|c}
			$N$ & 3 & 5 & 10 & 15  \\ \hline
			$\|T(\cdot;\mathbf{p}^{\star},N)\|_{\mathcal{H}_{\infty}}$& 0.512228 & 0.512239 & 0.512246 & 0.512247  \\\hline
			Computation time (\si{s}) & 17.39 & 91.56 & 806.3 & 2961
		\end{tabular}
	\end{table}
	
	Next, we will examine the disturbance rejection of the closed loop system for $N$ equal to 20. We simulate the system for $t\in[0,10]$ where both the state and the controller state are equal to $0$ for $t<0$, and each performance input is low pass filtered ($\omega_{cutoff}=6\pi$) Gaussian white noise which is scaled after filtering to have a root mean square energy of 0.1. \Cref{pa_fig:simulations} shows the performance inputs and outputs of the 10\textsuperscript{th} subsystem. We observe that the noise is well attenuated. The root mean square energy of $z_{10,1}$ and $z_{10,2}$ are equal to 0.03610 and  0.02094, respectively. 
	
	\begin{figure}[!htbp]
		\centering
		\begin{subfigure}{0.48\linewidth}
			\centering
			\includegraphics[width=\linewidth]{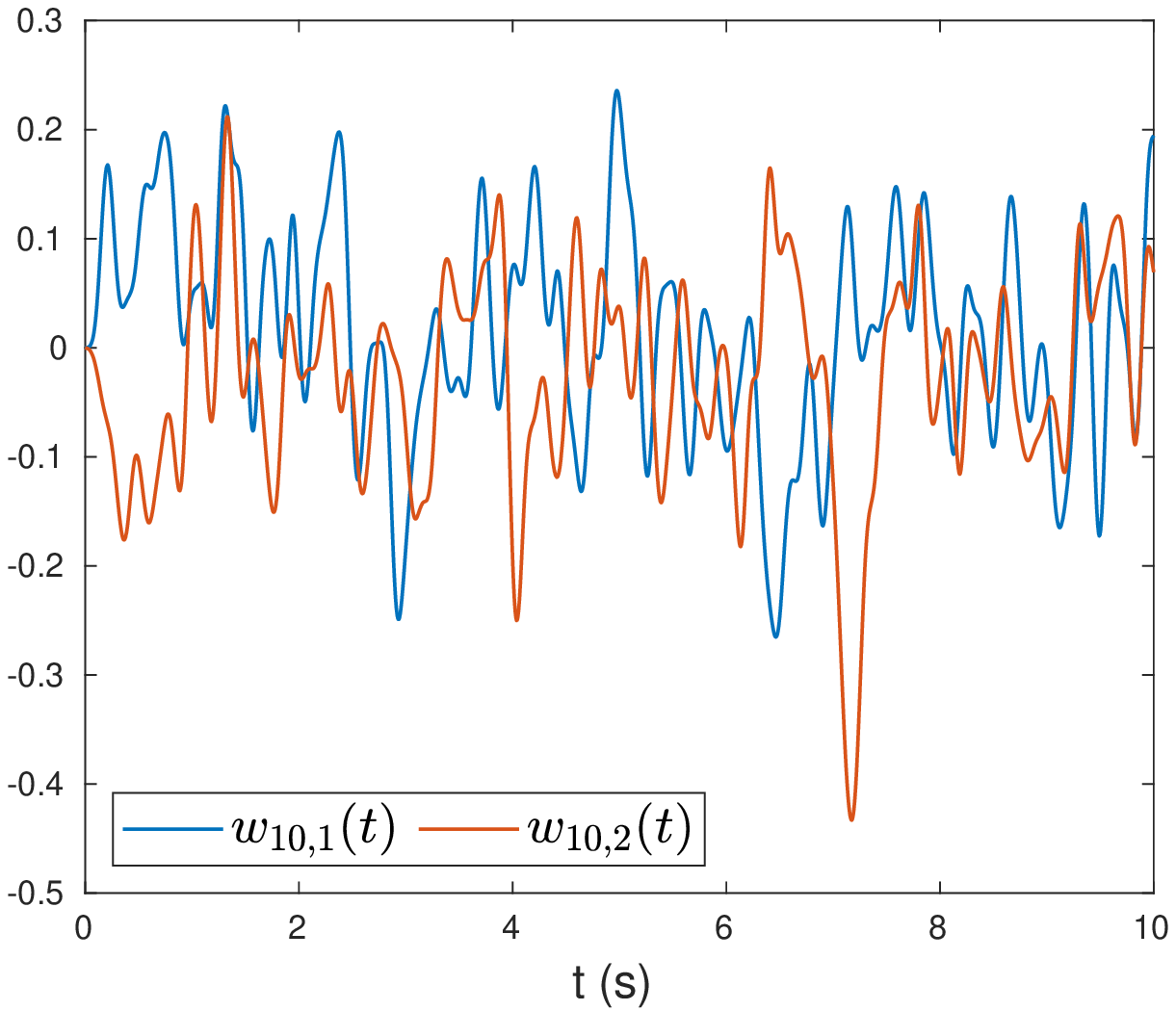}
		\end{subfigure}
		\begin{subfigure}{0.48\linewidth}
			\centering
			\includegraphics[width=\linewidth]{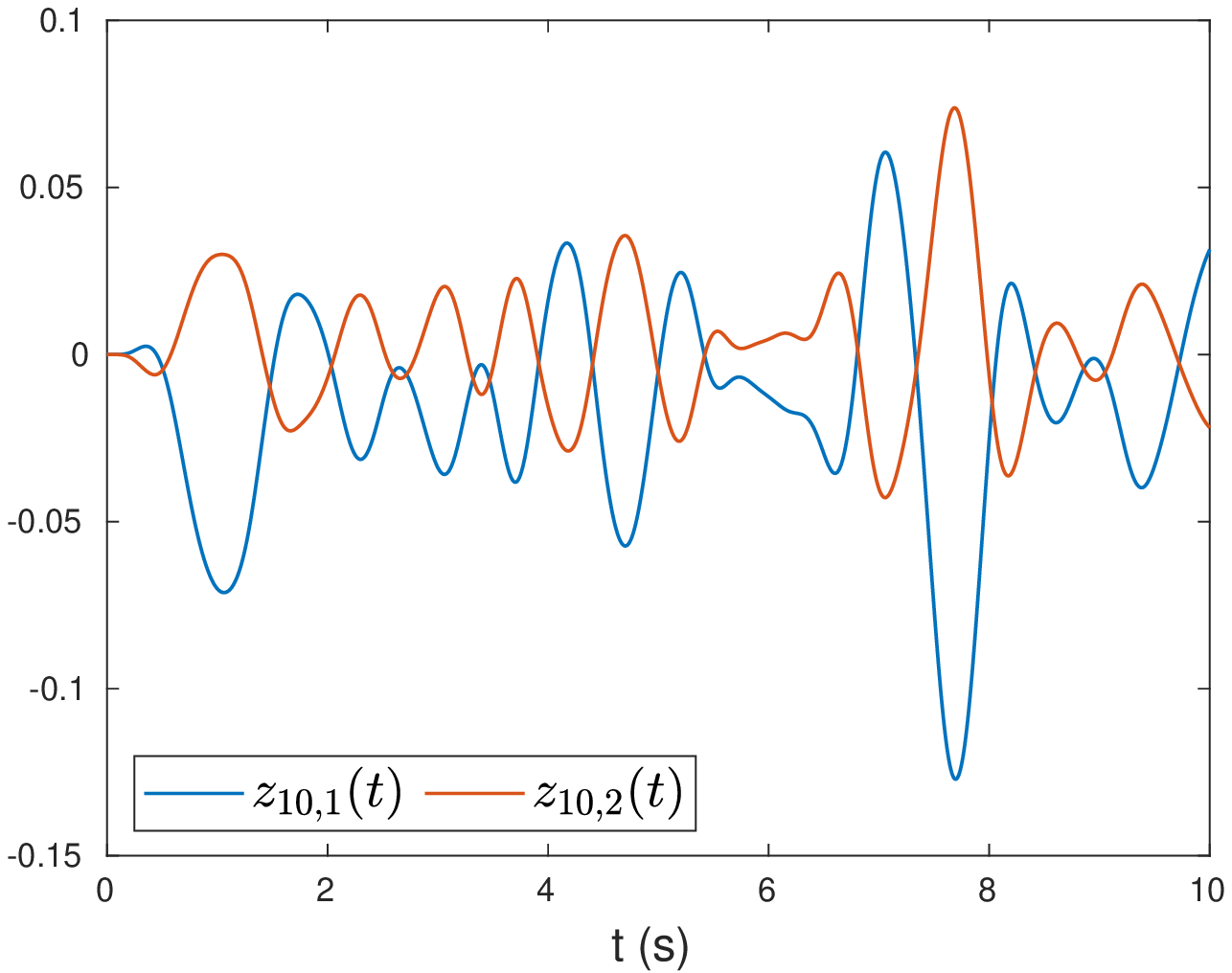}
		\end{subfigure}
		\caption{Simulation of the performance inputs (left) and outputs (right) of the 10\textsuperscript{th} subsystem for $t\in[0,10]$  of the closed loop system \eqref{pa_eq:subsystem}-\eqref{pa_eq:resulting_controller} for $N=20$. The state and the controller state are equal to zero for $t<0$. Each performance input is low pass filtered ($\omega_{cutoff}=6\pi$) Gaussian white noise, scaled after filtering to have a root mean square energy of 0.1.}
		\label{pa_fig:simulations}
	\end{figure}

	As explained in \Cref{pa_cor:upperbound_hinfnorm}, the robust \hinfnorm{} of an associated uncertain subsystem of form \eqref{pa:decoupled_system} was used as upper bound for the actual \hinfnorm{} of the networked system. \Cref{pa_fig:wcfreqresp} shows the ``worst-case gain function'' of this associated uncertain subsystem:
	 \begin{equation}
	 \label{pa_eq:wc_freq_responce}
	 \omega \mapsto \max_{\lambda \in [-1,\,1]} \sigma_1\left(\hat{T}_{\hat{w}\hat{z}}(\jmath\omega;\mathbf{p}^{\star},\lambda)\right)
	 \end{equation}
	 for $\omega \in [10^{-1},10^{2}]$. The robust \hinfnorm{}, which is equal to the maximal value of this worst-case gain function, is indicated with a dashed line. We observe that the worst-case gain function is flat and almost equal to the robust \hinfnorm{} for $\omega \in [0,10]$. This phenomenon is typical for the direct optimization framework.
	 
	\begin{figure}[!htbp]
		\centering
		\includegraphics[width=0.7\linewidth]{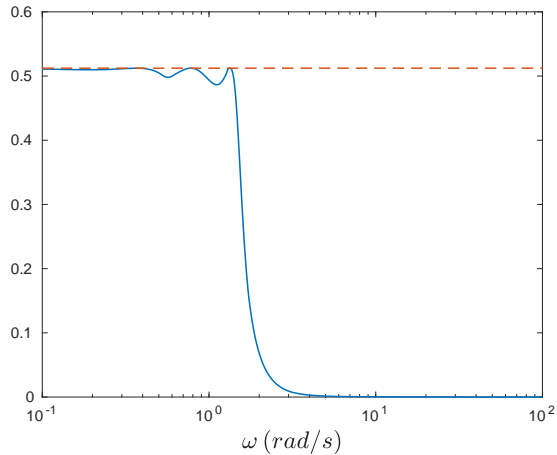}
		\caption{The worst-case gain function of the associated uncertain subsystem as defined in \eqref{pa_eq:wc_freq_responce} for $\omega \in [10^{-1},10^{2}]$. The robust \hinfnorm{}, $\|\hat{T}_{\hat{w}\hat{z}}(\cdot;\mathbf{p}^{\star},\cdot)\|_{\mathcal{H}_{\infty}}^{[-1,\,1]}$, is indicated with a dashed line.}
		\label{pa_fig:wcfreqresp}
	\end{figure}

\section{Conclusion}
\label{pa_sec:conclusion}
In this manuscript we introduced a scalable controller synthesis method for networked systems with identical subsystems and an interconnection topology that fulfills \Cref{pa_ass:adjacency_matrix_unitary}. Using the decoupling transformation of \Cref{sec:networked_systems} we arrived at an upper bound for the \hinfnorm{} which can be computed at a cost that does not dependent on the number of subsystems. This upper bound was the robust \hinfnorm{} of a single subsystem with an additional scalar uncertainty. Subsequently an algorithm to compute this robust \hinfnorm{} was discussed in Section~\ref{pa_sec:computing_rob_hinf}. Finally, Section~\ref{pa_sec:controller_design} showed how a controller that minimizes this upper bound can be synthesized using the direct optimization framework.

To conclude, we note that the presented method can be extended to more general identical subsystems, e.g. subsystems with direct feed-through terms. For such systems, however, the \hinfnorm{} is sensitive to infinitesimal delay changes and the strong \hinfnorm{} is a more appropriate performance measure \cite{pa:Gumussoy2011}. Furthermore, as in \Cref{sec:networked_systems}, one can show that the strong \hinfnorm{} of the overall system is upper bounded by the robust strong \hinfnorm{} of a single uncertain subsystem. This robust strong \hinfnorm{} can be computed using the method presented in \cite{pa:Appeltans2019}. 

\section*{Acknowledgment}
This work was supported by the project C14/17/072 of the KU Leuven Research Council and by the project G0A5317N of the Research Foundation-Flanders (FWO - Vlaanderen).


\begin{thebibliography}{10}
	
	\bibitem{pa:Appeltans2019}
	Pieter Appeltans and Wim Michiels.
	\newblock {A pseudo-spectra based characterisation of the robust strong
		H-infinity norm of time-delay systems with real-valued and structured
		uncertainties}.
	\newblock {\em Preprint online available on arXiv,}
	\newblock arXiv:1909.07778 [math.NA],
	\newblock 2019.
	
	\bibitem{pa:Borgioli2019}
	Francesco Borgioli and Wim Michiels.
	\newblock {A Novel Method to Compute the Structured Distance to Instability for
		Combined Uncertainties on Delays and System Matrices}.
	\newblock {\em IEEE Transactions on Automatic Control}, 65(4):1747--1754,
	2020.
	
	\bibitem{pa:Dileep2018}
	Deesh Dileep, Francesco Borgioli, Laurentiu Hetel, Jean~Pierre Richard, and Wim
	Michiels.
	\newblock {A scalable design method for stabilising decentralised controllers
		for networks of delay-coupled systems}.
	\newblock {\em IFAC-PapersOnLine}, 51(33):68--73, 2018.
	
	\bibitem{pa:Dileep2018a}
	Deesh Dileep, Ruben {Van Parys}, Goele Pipeleers, Laurentiu Hetel, Jean~Pierre
	Richard, and Wim Michiels.
	\newblock {Design of robust decentralised controllers for MIMO plants with
		delays through network structure exploitation}.
	\newblock {\em International Journal of Control}, 2018.
	
	\bibitem{pa:Guglielmi2011}
	Nicola Guglielmi and Christian Lubich.
	\newblock {Differential equations for roaming pseudospectra: paths to extremal
		points and boundary tracking}.
	\newblock {\em SIAM Journal on Numerical Analysis}, 49(3):1194--1209,  2011.
	
	\bibitem{pa:Guglielmi2013}
	Nicola Guglielmi and Christian Lubich.
	\newblock {Low-rank dynamics for computing extremal points of real
		pseudospectra}.
	\newblock {\em SIAM Journal on Matrix Analysis and Applications}, 34(1):40--66,
	2013.
	
	\bibitem{pa:Gumussoy2011}
	Suat Gumussoy and Wim Michiels.
	\newblock {Fixed-order H-Infinity control for interconnected systems using
		delay differential algebraic equations}.
	\newblock {\em SIAM Journal on Control and Optimization}, 49(5):2212--2238,
	2011.
	
	\bibitem{pa:Guttel2014}
	Stefan G{\"{u}}ttel, Roel {Van Beeumen}, Karl Meerbergen, and Wim Michiels.
	\newblock {NLEIGS: A Class of Fully Rational Krylov Methods for Nonlinear
		Eigenvalue Problems}.
	\newblock {\em SIAM Journal on Scientific Computing}, 36(6):A2842--A2864,
	2014.
	
	\bibitem{pa:Hilhorst2015}
	Gijs Hilhorst, Goele Pipeleers, Wim Michiels, and Jan Swevers.
	\newblock {Sufficient LMI conditions for reduced-order multi-objective H 2 / H
		$\infty$ control of LTI systems}.
	\newblock {\em European Journal of Control}, 23:17--25, 2015.
	
	\bibitem{pa:Jarlebring2010}
	Elias Jarlebring, Karl Meerbergen, and Wim Michiels.
	\newblock {A Krylov Method for the Delay Eigenvalue Problem}.
	\newblock {\em SIAM Journal on Scientific Computing}, 32(6):3278--3300, 
	2010.
	
	\bibitem{pa:Massioni2009}
	Paolo Massioni and Michel Verhaegen.
	\newblock {Distributed control for identical dynamically coupled systems: A
		decomposition approach}.
	\newblock {\em IEEE Transactions on Automatic Control}, 54(1):124--135, 2009.
	
	\bibitem{pa:Michiels2011}
	Wim Michiels.
	\newblock {Spectrum-based stability analysis and stabilisation of systems
		described by delay differential algebraic equations}.
	\newblock {\em IET Control Theory \& Applications}, 5(16):1829--1842, 2011.
	
	\bibitem{pa:Overton2009}
	Michael~L. Overton.
	\newblock {HANSO: a hybrid algorithm for nonsmooth optimization}.
	\newblock {\em Available online: }{\tt https://cs.nyu.edu/overton/software/hanso/},
	\newblock 2009.
	
	\bibitem{pa:Ozer2015}
	S.~Mert {\"{O}}zer and Altuʇ Iftar.
	\newblock {Decentralized controller design for time-delay systems by
		optimization}.
	\newblock {\em IFAC-PapersOnLine}, 28(12):462--467, 2015.
	
	\bibitem{pa:Packard1993}
	Andrew Packard and John~C. Doyle.
	\newblock {The complex structured singular value}.
	\newblock {\em Automatica}, 29(1):71--109, 1993.
	
	\bibitem{pa:PressWilliamH1996NriF}
	William~H. Press, Saul~A. Teukolsky, and William~T. Vetterling.
	\newblock {\em {Numerical recipes in Fortran 77 : the art of scientific
			computing}}.
	\newblock  Cambridge University press, Cambridge,
	2nd edition, 1996.
	
	\bibitem{pa:Wu2012}
	Zhen Wu and Wim Michiels.
	\newblock {Reliably computing all characteristic roots of delay differential
		equations in a given right half plane using a spectral method}.
	\newblock {\em Journal of Computational and Applied Mathematics},
	236(9):2499--2514, 2012.
	
	\bibitem{pa:Zhou1998}
	Kemin Zhou and John~C. Doyle.
	\newblock {\em {Essentials of robust control}}.
	\newblock Prentice Hall, Upper Saddle River, NJ, 1998.
	
\end{thebibliography}
\end{document}